\newcommand{\beq}{\begin{equation}}
\newcommand{\eeq}{\end{equation}expresion }
\newcommand{\ba}{\begin{eqnarray}}
\newcommand{\ea}{\end{eqnarray}}
\newtheorem{theorem}{Theorem}[section]
\newtheorem{remark}{Remark}[section]
\newtheorem{lemma}{Lemma}[section]
\newtheorem{corollary}{Corollary}[section]
\newtheorem{definition}{Definition}[section]
\title{Existence and uniqueness of a density 
probability solution for the stationary 
Doi-Edwards equation.}
\author{Ionel Sorin Ciuperca$^1$, Arnaud Heibig$^2$
\thanks{Corresponding authors.  E-mail: ciuperca@math.univ-lyon.fr and 
arnaud.heibig@insa-lyon.fr}   }
\numberwithin{equation}{section}
\begin{document}

\maketitle

\begin{flushleft}

Universit\'e de Lyon, CNRS, Institut Camille Jordan UMR 5208\\

$^1$ Universit\'e Lyon 1,  B\^at Braconnier, 43 Boulevard du 11 Novembre 1918, 
F-69622, Villeurbanne, France. 

$^2$ INSA-Lyon,  P\^ole de Math\'ematiques, B\^at. Leonard de Vinci No. 401, 
21 Avenue Jean Capelle, F-69621, Villeurbanne, France.

\end{flushleft}

\begin{abstract}

We prove the existence, uniqueness and non negativity of solutions
for a nonlinear stationary Doi-Edwards equation. The existence is proved
by a perturbation argument.
We get the uniqueness and the non negativity by showing
the convergence in time of 
the solution of the evolutionary Doi-Edwards equation 
towards any stationary solution. 

\end{abstract} 

\begin{flushleft}

Keywords: Polymeric fluids; Doi-Edwards equation; Stationary equation;
Well-posedness.
\end{flushleft}

%%%%%%%%%%%%%%%%%%%%%%%%%%%%%%%%%%%%%%%%%%%%%%%%%%%%%%%%%%%%%%%%
%%%%%%%%%%%%%%%%%%%%%%%%%%%%%%%%%%%%%%%%%%%%%%%%%%%%%%%%%%%%%%%
\section{Introduction}\label{1}
%%%%%%%%%%%%%%%%%%%%%%%%%%%%%%%%%%%%%%%%%%%%%%%%%%%%%%%%%%%%%%%
%%%%%%%%%%%%%%%%%%%%%%%%%%%%%%%%%%%%%%%%%%%%%%%%%%%%%%%%%%%%%%
It is well established that the modelling of non-Newtonian and viscoelastic 
flows bases on molecular theories.
In such theories, kinetical concepts are used to obtain
a mathematical description of the configuration of 
polymer chains. 
One of the most popular theories used to predict the behaviour of the melted
polymers is that of Doi and Edwards (see for exemple \cite{de78} and \cite{de89}).
It makes use of de Gennes reptation concept (\cite{dg79}).
In the Doi-Edwards model, chains of polymer
are confined within a tube of surrounding chains,
and chains can not
move freely. This description of the 
entanglement phenomenon leads to the 
concept of a primitive chain (the tube centerline).
The primitive chain, is not the real chain, and is shorter. Nevertheless, 
the
goal of Doi-Edwards theory is to 
describe the dynamic of the primitive chain. Basically, 
short time fluctuations of the polymer chain happen near the 
primitive chain in a wriggling motion, 
while fluctuations on larger time scales 
(say $t\geq T_{equilibration}$, see \cite{DE}) account for the 
chain ability to move inside the tube  (roughly speaking,
$T_{equilibration}$ is the time after which the primitive chain
feels the constraints imposed by the tube). This is the 
"snakelike" diffusive motion. Since 
diffusion  concerns the primitive chain, the
primitive chain finally disengages from the original tube.
This is a major complication in the theory, and for more details the reader is 
refered to \cite{DE}, \cite{de78} and \cite{de89}.
Nevertheless, notice that in the average (say on $\Delta t = T_{equilibration}$) 
the primitive chain and the real chain coincide.
Finally, for details on the thermodynamics of the model, see for  
instance 
\cite{de89}, \cite{pal04}.

From a mathematical point of view,  
a  primitive chain is represented as a curve
in $\mathbb{R}^3$. The position on the primitive chain is given by
a curvilinear coordinate $s \in [0, 1]$ (from now on, all the primitive chains are supposed
to have the same length which is normalized to 1).
Moreover, the orientation for any $s$ is given by a unitary vector $u$ tangent to
the curve; we then have $u \in S_2$ where $S_2$ is the unit sphere in
$\mathbb{R}^3$, that is:
$$
S_2 = \{ u \in \mathbb{R}^3, \quad \|u\| = 1 \}
   $$
where $\|\cdot\|$ is the Euclidian norm in $\mathbb{R}^3$.
The tangent vector $(s, u)$ is the microscopic variable of the model.

The rheology of such a fluid is obtained with the help of the so called 
{\it configurational probability density} of the molecules, denoted here by
$F$. It is a probability density with respect to the variable $u$.
Assuming space independence, we have \
$ F = F(t, s, u) $ \ where \ $t \geq 0$ is the time variable.
In the general case $ F = F(t, x, s, u) $ and one should 
write  equation \eqref{1:evol-princ} below with a convective term, i.e
replace $\partial F/ \partial t $ by 
the material derivative 
$\partial F/ \partial t + v\cdot\nabla_x F$. It would lead to  serious complications
since, in that case, a complementary equation (conservation law) is required
to determine $v$.
Here, as usual, $v$ stands for the macroscopic speed of the fluid.

The probability density satisfies the following PDE,  known under the name of
{\it Doi-Edwards equation}, and which is of Fokker-Planck-Smoluchowski type:
\begin{equation}
\frac {\partial F} {\partial t} - D \frac{\partial^2F}{\partial s^2}+  
\frac{\partial}{\partial u}.(\mathcal{G}F)
-\epsilon F\kappa : u\otimes u
+\epsilon 
\frac{\partial}{\partial s}\Big( F\kappa:\lambda(F)\Big)
=0  \textrm{ on  } S_2 \times ]0, 1[ 
\label{1:evol-princ}
\end{equation}
The ends of the chains are random, hence:
\begin{equation}
F(s=0) = F(s=1) = (1/4\pi)
\label{1:evol-cl}
\end{equation}
and for the initial condition: 
%$F(t=0) = F_0$
%
\begin{equation}
F(t=0) = F_0(s, u) 
%\quad given
\label{1:init}
\end{equation}
(see \cite{de89}, \cite{pal04} and \cite{CHP1}).

In the equation $\eqref{1:evol-princ} \; D > 0 $ and $\epsilon \geq 0$ are physical
coefficients and $\kappa = \kappa(t) \in \mathcal{M}_3(\mathbb{R})$ is the
velocity gradient; we also have
$$
\mathcal{G} =
\kappa.u - (\kappa:u \otimes u)u
  $$
and
$$ 
\lambda(F)(s) = \int_0^s\int_{S_2}
F(s', u)u \otimes ud\mu d s'.
   $$  
The case $\epsilon = 0$ corresponds to 
the so called {\it Independent Alignment Approximation} (IAA) 
for which explicit solutions 
of the evolutionary configurational PDE are known (see \cite{de78}).
In the case $\epsilon > 0$, the two mechanism
described by the terms 
$-\epsilon F\kappa : u\otimes u$
and $\epsilon 
\frac{\partial}{\partial s}\Big( F\kappa:\lambda(F)\Big)$
compensate, keeping constant the number of segments
by unit length:
$$\int_0^1\int_{S_2}\Big[
-\epsilon F\kappa : u\otimes u
+\epsilon 
\frac{\partial}{\partial s}\Big( F\kappa:\lambda(F)\Big)\Big]
d\mu(u)ds = 0
$$
In the present paper, we will make little use
of this relation, but it is likely that a thorough
analysis of the stationary problem (i.e for large $\epsilon$) would appeal to such
cancellation property. Note also that this is an ''ad hoc`` compensation 
since these two terms arise from two different phenomena. The first one 
quantifies the creation of new segments, while the second one
is due to the extension-retractation mechanism by which the 
chain keeps constant its curvilinear length.

Existence, uniqueness, and regularity of solutions of
\eqref{1:evol-princ}, \eqref{1:evol-cl}, \eqref{1:init} are proved
in \cite{CHP1}, as well as
the fact that $F$ is a
probability density. For existence
results in the case of related - but different - molecular models, see 
%\cite{chupin2}, 
\cite{mas08}
\cite{CM}, \cite{JLB1}.
As an aside, notice that the Doi-Edwards model should not be mixed up
with what is commonly called the {\it Doi model}
(see \cite{ot08}), this latter being used for dilute polymers.
In Doi theory, molecules are considered as rigid dumbells.
%Notice also that the Doi-Edwards model should not be mixed up with 
%the {\it Doi model} 
%(see \cite{ot08}), usually 
%used for dilute polymers and which considers molecules as rigid dumbells.
%The reader may consult \cite{}?...? for results on the 
%Doi evolutionary problem, and \cite{} for results on the Doi stationary
%problem. Unfortunately, the methods used in the aforementioned
%papers can not be used in the present one, and it turns out 
%that the Doi and the Doi-Edwards model, though very close
%in appearance, do not share so many common features from a
%mathematical point of view.

In this paper we focus on the following stationary problem associated with
\eqref{1:evol-princ}, \eqref{1:evol-cl}:
\begin{align}
& - \frac{\partial^2F}{\partial s^2}+  
\frac{\partial}{\partial u}.(\mathcal{G}F)
-\epsilon F\kappa : u\otimes u
+\epsilon 
\frac{\partial}{\partial s}\Big( F\kappa:\lambda(F)\Big)
=0  \textrm{ on  } S_2 \times ]0, 1[
\label{KKU} 
\\
& F(s=0) = F(s=1) = (1/4\pi)
\label{YYUU}
\end{align}
In equation $\eqref{KKU}$, we set $D = 1$, which is not
restrictive, and we assume that the 
tensor $\kappa$ does not depend 
on $t$. Notice that stationary Fokker-Planck
equations with degenerate constitutive functions,  
but elliptic principal part, are studied for exemple in \cite{chupin1}, 
\cite{chupin2} and \cite{CIUPAL}.
%For the sake
%of generality, we do not assume 
%$\epsilon \in \mathbb{R}$.
%$\epsilon \geq 0$, though in the physical case
%$\epsilon\in[0, 1]$.

%For a problem similar to $(\ref{KKU}, \ref{YYUU})$, the reader 
%is refered to \cite{CIUPAL}, \cite{cp3}. 

The two points that are adressed in the sequel
are the well posedness and the non negativity of solutions of  
equations (\ref{KKU})-(\ref{YYUU}) (remark that,  in contrast with 
$F \geq 0$, equality
$\int_{S_2}F(u)d\mu(u) = 1$ can easily be obtained 
by integrating \eqref{KKU} on $S_2$
and making use of \eqref{YYUU}).
We will essentially
restrict 
to $|\epsilon|$ small, since global estimates 
on the sphere $S_2$ do not seem easy to obtain for 
$|\epsilon|$ large. As a matter of fact, even for
$\epsilon = 0$, well posedness of the stationary problem 
%is not
%straightforward due to the lack of ellipticity
%
may not be obvious due to the lack of ellipticity
in the $u$-variable. 
Moreover, due to the probabilistic features of the equations,
the problem has to be well posed in 
$L^1(S_2)$, with some extra smoothness due for instance
to the $FLog(F)$ entropy estimates on the associated
time dependent problem (see for instance \cite{CM}). But $L^2(S_2)$ estimates 
are not expected.
%The term 
Anyhow, proceeding as in 
\cite{de78} 
%(see also \cite{CHP1}), 
i.e writing
%function $f=F-(4\pi)^{-1}$ as a Fourier
%serie in $s$:
$$
f(s, u) := F(s, u)-\dfrac{1}{4\pi}= \sum_{n\in\mathbb{N}^*}
f_n(u)sin(n\pi s)
$$
the original problem \eqref{KKU}-\eqref{YYUU}  with $\epsilon = 0$
is reduced to a set of well posed problems 
in $L^r(S_2)$ with $r-1 \geq 0$ small enough (see section \ref{3}) :
\begin{align}
\dfrac{\partial}{\partial u}\cdot
\Big(\mathcal{G}f_n\Big)+n^2\pi^2f_n = g_n,\phantom{HHH} n\in\mathbb{N}^*\label{FRE}
\end{align}

Therefore, in order to prove existence for system \eqref{KKU}-\eqref{YYUU}, 
we proceed in
the following way. We first establish (Section \ref{3}) the 
existence and uniqueness 
%of solutions of $\eqref{KKU}-\eqref{YYUU}$ 
for $\epsilon = 0$, and then, 
%extend this result to 
prove in Section \ref{4} the existence result for
$|\epsilon|$ small via the implicit function theorem.
Of course, a suitable fixed point procedure, using a 
variable basis of 
diagonalization for a Sturm-Liouville problem 
associated with
$\eqref{KKU}$, would also provide the general existence result, but at the cost of 
tedious estimates and notations. 
The advantage of the present approach is to work for $\epsilon = 0$ in a
fix Hilbertian basis of eingenvectors, 
namely $\big( \sqrt{2} \, sin(n\pi s) \big)_{n \in \mathbb{N}*}$,
and to extend the existence result by a transversality 
argument, supplying at the same quite strong
$L^r(S_2)$ estimates frequency by frequency.
%From a technical point of view, the result
%in the case $\epsilon = 0$ is obtained by writing 
%function $f = F - (1/4\pi)$
%as a Fourier serie in $s$, and then perform an
To be thorough, remark that for $\epsilon = 0$, 
one can choose large  $r\geq 2$ for high
frequencies - but this is  not the case for low frequencies. As a 
consequence, solutions of the problem 
\eqref{KKU},\eqref{YYUU}
are obtained in a subspace of 
$W^{1, \infty}\big(0, T, L^r(S_2)\big)$, subspace
which is not easily  characterized in term of the classical
functional spaces. 
The restriction $r < 2$ on low frequencies also causes some difficulties
in the proof of the positivity of $F$.

Variants of the above arguments could be used to show uniqueness
of solutions of problem $\eqref{KKU}-\eqref{YYUU}$ by duality. 
Nevertheless, 
we shall obtain this result as a consequence of the proof that
$F$ is a  probability density. In order to prove this last result, 
we establish that solutions of problem $\eqref{KKU}-\eqref{YYUU}$
are the limits when $t\rightarrow \infty$ of solutions of the
time dependent Doi Edwards
problem. Since the solutions of the Doi Edwards 
problem are known to be  probability densities (see \cite{CHP1}), 
this provides the result; 
this approach also provides the desired uniqueness
(see Section \ref{5} and Section \ref{6}).
%Moreover, 
%$\it{ any }$ solution of $(\ref{KKU}, \ref{YYUU})$ 
%is the limit of the above time dependent  solution, hence
%the uniqueness. 
The main difficulty in the 
proof is to bound  on $\mathbb{R}_t^+$ 
in a suitable norm nonlinear terms such as
$\dfrac{\partial}{\partial s}\Big( F\kappa:\lambda(F)\Big)$. 
%For that purpose, we essentially use
%the fact that the solution of the evolutionary 
%Doi-Edwards equation is a probability measure 
%(see \cite{CHP}), with provide a uniform bound $\lambda(F)$
%on $\mathbb{R}_t^+$, 
%Together, as already mentionned,
%with  majorization which have no clear counterpart
%on the original $(s, u)$ space.

%%%%%%%%%%%%%%%%%%%%%%%%%%%%%%%%%%%%%%%%%%%, %%%%%%%%%%%%%%%%%%%%%
%%%%%%%%%%%%%%%%%%%%%%%%%%%%%%%%%%%%%%%%%%%%%%%%%%%%%%%%%%%%%%%
\section{Presentation of the problem and of the 
main results.}\label{2}
%%%%%%%%%%%%%%%%%%%%%%%%%%%%%%%%%%%%%%%%%%%%%%%%%%%%%%%%%%%%%%%
%%%%%%%%%%%%%%%%%%%%%%%%%%%%%%%%%%%%%%%%%%%%%%%%%%%%%%%%%%%%%%
 
%Let $\Vert.\Vert$ be the Euclidian 
%norm and $S_2 = \{u\in \mathbb{R}^2 
%\textrm{ such that } 
%\Vert u \Vert =1\}$
%be the unit sphere of $\mathbb{R}^3$. 
Throughout this paper 
we write $Q = \, ]0, 1[ \times S_2$. Making use of the Riemannian 
metric induced by the canonical
inner product $.$ of $\mathbb{R}^3$, we can define the 
usual surfacic measure $d\mu$ (or $d\mu(u)$), the 
gradient $\frac{\partial}{\partial u}$ and the divergence
$\frac{\partial}{\partial u} \cdot$ operators on  $S_2$
(see \cite{AM}). Since 
$S_2$ is a Riemannian submanifold of $\mathbb{R}^3$, 
the gradient of 
a smooth scalar valued function 
$g:S_2\rightarrow \mathbb{R}$
can aternatively be defined as the 
following projection (see \cite{HP}):
$$ \dfrac{\partial}{\partial u} g = 
\nabla_u\tilde{g} - \big(\nabla_u \tilde{g} \cdot u\big)u$$
%for a smooth scalar valued function 
%$g:S_2\rightarrow \mathbb{R}$
where $\tilde{g}$
is any smooth extension of $g$ in a neighborhood 
of $S_2$ in $\mathbb{R}^3$ 
and $\nabla_u$ is the usual gradient in $\mathbb{R}^3$. 
Similarly,
for any smooth vector valued vector field of $S_2$, identified
with
$X\in C^{1}(S_2, \mathbb{R}^3)$ with 
$X\cdot u = 0$, the divergence of $X$ can be defined as 
(see \cite{HP}): 
$$
\dfrac{\partial}{\partial u} \cdot X 
=
\nabla_u \cdot \tilde{X}
-
\tilde{X}' u \cdot u
$$
where $\tilde{X}$ is any smooth 
extension of $X$ in a neighborhood  
of $S_2$ in $\mathbb{R}^3$. Notation 
$\tilde{X}'$ stands for the usual 
Jacobian matrix of $\tilde{X}$.
In what follows, we will essentially 
use Stokes formula:

\begin{align}
\int_{S_2}X\cdot\frac{\partial g}{\partial u}d\mu = -
\int_{S_2}\Big(\frac{\partial}{\partial u} \cdot X\Big)gd\mu\label{STOKES1}
\end{align}
valid for any smooth functions $X:S_2\rightarrow \mathbb{R}^3$ with
$X \cdot u = 0 $, and $g:S_2\rightarrow \mathbb{R}$. 
In particular, for 
$g = 1$, we get:
\begin{align}
\int_{S_2}\Big(\frac{\partial}{\partial u} \cdot X\Big)d\mu = 0\label{STOKES2}
\end{align}
Formulas \eqref{STOKES1} and \eqref{STOKES2} will be used to neglect or discard 
terms coming from $ \dfrac{\partial}{\partial u}
\cdot\big(\mathcal{G}f \big)$.
%In the sequel, we are looking for a function 
%$F: Q\rightarrow \mathbb{R}$, $F = F (u, s)$
%solution 
%of the problem:
%\begin{align}
%&-\frac{\partial^2F}{\partial s^2}+  
%\frac{\partial}{\partial u}.(\mathcal{G}F)
%-\epsilon F\kappa : u\otimes u
%+\epsilon expresion 
%\frac{\partial}{\partial s}\Big( F\kappa:\lambda(F)\Big)
%=0  \textrm{ on  }Q \label{EDPC}\\
%&F(s=0) = F(s=1) = (1/4\pi)\label{CLNH}
%\end{align}
%In equation $\eqref{EDPC}$, the tensor function 
%$\lambda(F)$ is defined by:
%\begin{align}\label{LAM}
%\lambda_{ij}(F)(s) = \int_0^s\int_{S_2}
%F(u, \sigma)u_iu_jd\mu d\sigma \phantom{'kklhk}
%i, j =1, 2, 3
%\end{align}
%In this paper, in order to shorten the writings, we sometimes 
%write 
%$\kappa uu$ in place of $\kappa:u\otimes u$
%and  $\kappa u$ in place of $\kappa\cdot u$. 
%and $\kappa uu$ in place of $\kappa:u\otimes u$.

Using the following change of unknown function 
$f = F-\dfrac{1}{4\pi}$ and making use of:
\begin{align}
&\dfrac{\partial}{\partial u}\cdot \mathcal{G} = -3 
%\kappa uu\\
\kappa : u \otimes u 
\label{DKU}\\
&\kappa : Id_3 = tr(\kappa) = 0
\label{TR}
\end{align}
problem 
$\eqref{KKU}$-$\eqref{YYUU}$ 
becomes a homogeneous one:

\begin{align}
-\frac{\partial^2f}{\partial s^2}+  
\frac{\partial}{\partial u}\cdot(\mathcal{G}f)
-\epsilon f\kappa : u\otimes u
+&\epsilon \frac{\partial}{\partial s}\Big( f\kappa:\lambda(f)\Big)
\nonumber\\
&+\dfrac{\epsilon}{4\pi}\int_{S_2}\kappa:v \otimes vf(s,v )d\mu(v)
= \dfrac{3+\epsilon}{4\pi}\kappa:u \otimes u  \textrm{ on } Q\label{EDPH}\\
f(s=0) =f(s=1) = 0\phantom{UYTUY}\label{CLH}
\end{align}
In the following, we use a Hilbertian
basis of eigenvectors of the Laplacian in 
$]0, 1[$ with Dirichlet boundary conditions. Namely, 
family $(H_n)_{n\in\mathbb{N}^{*}}$ is defined by 
%$$(H_n)_{n\in\mathbb{N}*}$ of $L^2(0, 1)$, with  $(H_n)$ defined on $]0, 1[$ by: 
$$H_n(s)=\sqrt{2}sin(n\pi s)$$
For any $g\in L^1(Q)$, $n\in\mathbb{N}^*$, we write $g_n(u) = \int_0^1g(s, u)H_n(s)ds$.
%For future reference, notice that, for any $n\in\mathbb{N}^{*}$ we have: 
%\begin{align}\label{1n}
%1_n=\dfrac{1}{n\pi}[1+(-1)^n]
%\end{align}
For any $r\geq 1$, we define the vector spaces $X_r$ by:
\begin{align}\label{GH}
X_r=\big\{g& \in W^{1, \infty}\big(0, 1, L^r(S_2)\big) 
\textrm{ such that for any } n\in\mathbb{N^*},\nonumber\\ 
&\mathcal{G} \cdot \dfrac{\partial g_n}{\partial u} \in L^r(S_2)
\textrm{ and }
\displaystyle\sup_{n\in\mathbb{N^{*}}}\Big(n^3\Vert g _n \Vert_{L^r(S_2)}
\Big) + 
\displaystyle\sup_{n\in\mathbb{N^*}}\Big(n\Big\Vert\mathcal{G} \cdot
\dfrac{\partial g_n}{\partial u} \Big\Vert_{L^r(S_2)}
\Big) < \infty
\big\}
\end{align}
We will see in section $3$ that $X_r$
is a Banach space when endowed with its natural 
norm $\Vert . \Vert_{X_r}$: 
\begin{align}\label{NORM}
\Vert g \Vert_{X_r} = 
\displaystyle\sup_{n\in\mathbb{N^*}}\Big(n^3\Vert g _n \Vert_{L^r(S_2)}
\Big) + 
\displaystyle\sup_{n\in\mathbb{N^*}}\Big(n\Big\Vert\mathcal{G} \cdot
\dfrac{\partial g_n}{\partial u} \Big\Vert_{L^r(S_2)}
\Big) 
\end{align}
Moreover, we shall prove that 
%for 
any  $g\in X_r$ satisfies 
the homogeneous condition (see remark $\ref{reu}$ below):
$$g(s=0) = g(s=1)=0$$
Remark also that if $r_2 \geq r_1 \geq 1 $ then $X_{r_2}$ is continuously
embedded in $X_{r_1}$.

\begin{remark} Definition of $X_r$ is a simple but useful step in our 
analysis. It is formally obtained by counting the powers of
$n$ in equation \eqref{EDPH}. Notice for instance that we write
$n^3\Vert g _n \Vert_{L^r(S_2)}$ in place of 
$n^2\Vert g _n \Vert_{L^r(S_2)}$ as a corresponding term
to $-\partial^2 f / \partial s^2 $. This gain of one power 
in definition of $X_r$ arises from the right hand-side of equation 
\eqref{EDPH}, 
which does not depend of the $s$ variable. Indeed, 
for any $n\in \mathbb{N}^*$:
$$
\vert\int_0^1  (\kappa:u \otimes u) 
%\phantom{1}
sin(n\pi s)
ds\vert 
\leq C/n
$$
supplying one power of $n$.
\end{remark}

Before giving the weak formulation of 
equations $\eqref{EDPH}-\eqref{CLH}$  in the $X_r$
functional frame, notice that
%\begin{align}
%\begin{itemize}
%\item $\textrm{ for any } f\in X_r, 
%\textrm{ we have } 
%f\kappa:\lambda(f) \in L^1(Q)$
%\item 
$\textrm{ for any } g, h \in X_r  \; \textrm{ we have }
\lambda({h})\in W^{2, \infty}(0, 1) 
\textrm{ and } g \in W^{1, \infty}\big(0, 1, L^r(S_2)\big)$
%\end{itemize}
%It follows that:
which implies:
\begin{align}\label{PROD}
\textrm{ for any } g, h \in X_r, 
%\textrm{ we have }
\dfrac{\partial}{\partial s}\big(g \kappa:\lambda(h)\big)
%\lambda({h})\in W^{2, \infty}(0, 1) 
%\textrm{ and } g\in W^{1, \infty}\big(0, 1, L^r(S_2)\big)
\textrm{ is well defined and belongs to } L^{\infty}
\big(0, 1, L^r(S_2)\big)
\end{align}
\begin{definition}\label{DEF}
We say that $f$ is a weak 
solution 
of $\eqref{EDPH}-\eqref{CLH}$ if 
$f$ belongs to $X_1$ and satisfies:
\begin{align}\label{WEAK}
\int_{Q}\Big[
\dfrac{\partial f}{\partial s}\dfrac{\partial\phi}{\partial s}
-& f\mathcal{G}\cdot\dfrac{\partial\phi}{\partial u}
-\epsilon f \kappa : u \otimes u\phi
+\epsilon \dfrac{\partial}{\partial s} 
\big[f\kappa:\lambda(f)\big]
\phi
+\dfrac{\epsilon}{4\pi} 
\int_{S_2}\kappa : v \otimes v fd\mu(v)\phi
\Big]dQ\nonumber\\
=&\dfrac{3+\epsilon}{4\pi}
\int_{Q}\kappa : u \otimes u \phi dQ \quad 
\forall \; \phi \in H_0^1\big(0, 1, H^2(S_2)\big)
\end{align}
\end{definition}
The main result of this paper is:
\begin{theorem}\label{TH}
There exist 
%$r>1$ and 
$\epsilon_0>0$ 
such that, for any $\epsilon \in ]-\epsilon_0, \; 
\epsilon_0[$, there exists a unique weak solution 
$f_{\epsilon}$ of equations $\eqref{EDPH}-\eqref{CLH}$.
Moreover:
\begin{itemize} 
\item 
%\begin{equation}
%\label{2:regul}
%\mbox{There exists} 
There exists \ \ $  r > 1 $ \ \ such that
\quad $f_{\epsilon} \in X_r \quad \forall \; \epsilon \in \; ]-\epsilon_0, \; \epsilon_0[$
%\hspace*{12mm}
\quad \quad (regularity result)
\item 
$f_{\epsilon}+(1/4\pi)$ is a probability density on 
$S_2$. That is, for any $s\in]0, 1[$, 
we have
\begin{equation}
\label{dens-prob}
\Big(f_{\epsilon}+\dfrac{1}{4\pi}\Big)(s) \geq 0 \textrm { a.e in } 
u\in S_2 \textrm { and }
\int_{S_2}\Big(f_{\epsilon}+\dfrac{1}{4\pi}\Big)d\mu(u) = 1
\end{equation}
\end{itemize}
\end{theorem}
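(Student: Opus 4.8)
The plan is to split the theorem into its three components—existence, regularity ($f_\epsilon\in X_r$ with $r>1$), uniqueness—and the probability-density property, and to attack them as outlined in the introduction. First I would dispose of the $\epsilon=0$ case. Writing $f=\sum_{n\ge 1} f_n(u)H_n(s)$ and projecting \eqref{EDPH} on each $H_n$, the problem decouples into the family \eqref{FRE}: $\frac{\partial}{\partial u}\cdot(\mathcal{G}f_n)+n^2\pi^2 f_n=g_n$, where $g_n$ comes from the (s-independent) right-hand side and hence satisfies $\|g_n\|_{L^r(S_2)}\le C/n$. Using $\frac{\partial}{\partial u}\cdot\mathcal{G}=-3\kappa:u\otimes u$ from \eqref{DKU}, this is a transport-type equation on $S_2$ with bounded drift $\mathcal{G}$ and a large zeroth-order coefficient $n^2\pi^2$. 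For $n^2\pi^2$ large enough relative to $\|\mathcal{G}\|_{\infty}$ one gets, by a standard $L^r$ estimate (multiply by $|f_n|^{r-2}f_n$, integrate, use \eqref{STOKES2} to kill the divergence term up to the $\frac{\partial}{\partial u}\cdot\mathcal{G}$ contribution), an estimate $\|f_n\|_{L^r(S_2)}\le C\|g_n\|_{L^r(S_2)}/n^2\le C/n^3$, which is exactly what the definition \eqref{GH} of $X_r$ demands; the companion bound on $\|\mathcal{G}\cdot\partial_u f_n\|_{L^r}$ follows by reading it off the equation. The restriction $r-1>0$ small is needed precisely so that the exponent in the Hölder/interpolation step keeps the drift term controllable uniformly in $n$; for the finitely many low frequencies where the naive estimate fails one argues separately, still in some $L^r$, $r>1$. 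This gives existence and uniqueness in $X_r$ for $\epsilon=0$.

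Next, for $|\epsilon|$ small I would set up the implicit function theorem. Define a map $\Phi:\mathbb{R}\times X_r\to Y$ (with $Y$ the natural target space, e.g. the space of admissible right-hand sides in the weak formulation \eqref{WEAK}) by $\Phi(\epsilon,f)=$ [the weak form of \eqref{EDPH} minus its right-hand side]. One checks $\Phi(0,f_0)=0$ with $f_0$ the $\epsilon=0$ solution; that $\Phi$ is $C^1$, the only nontrivial point being differentiability of the quadratic term $\partial_s(f\kappa:\lambda(f))$, which is handled by the bilinearity noted in \eqref{PROD} together with the fact that $X_r\subset W^{1,\infty}(0,1;L^r(S_2))$ and $\lambda(f)\in W^{2,\infty}(0,1)$; and that $D_f\Phi(0,f_0)$ is an isomorphism $X_r\to Y$—but this is just the $\epsilon=0$ linear solvability established in the first step (the Fréchet derivative at $\epsilon=0$ is the linear operator $f\mapsto$ the left side of \eqref{FRE} frequency by frequency). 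The implicit function theorem then yields $\epsilon_0>0$ and a $C^1$ branch $\epsilon\mapsto f_\epsilon\in X_r$ solving \eqref{WEAK}, giving both existence and the regularity assertion $f_\epsilon\in X_r$, $r>1$. Local uniqueness in $X_r$ near $f_0$ is automatic; global uniqueness in $X_1$ is deferred to the dynamical argument below.

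Finally, for the probability-density property \eqref{dens-prob} and the global uniqueness, I would follow the route announced in the introduction: show that any weak solution $f_\epsilon$ of \eqref{EDPH}--\eqref{CLH} in $X_1$ is the long-time limit of the solution $F(t,\cdot)$ of the evolutionary problem \eqref{1:evol-princ}--\eqref{1:init}, for a suitable initial datum (say $F_0=f_\epsilon+1/4\pi$, or any probability-density datum). Setting $w(t)=F(t)-f_\epsilon-1/4\pi$, one derives the equation satisfied by $w$, which is \eqref{1:evol-princ} linearized-plus-quadratic around the stationary state, and one seeks an energy/entropy functional—combining an $L^2$-in-$s$, $L^1$-on-$S_2$ type norm with an $FLog F$ entropy contribution as hinted—whose time derivative is $\le -c\,\|w\|^2$ modulo the nonlinear remainder. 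The $\int_0^1\!\!\int_{S_2}[\cdots]=0$ cancellation recorded after \eqref{1:evol-princ} is used to control the two $\epsilon$-terms. Gronwall then gives $w(t)\to 0$, hence $F(t)\to f_\epsilon+1/4\pi$; since each $F(t)$ is a probability density (by \cite{CHP1}), so is the limit, which proves \eqref{dens-prob}. The same convergence, applied to two stationary solutions $f_\epsilon^{(1)},f_\epsilon^{(2)}$ as common limits of the evolutionary solution started from one of them, forces $f_\epsilon^{(1)}=f_\epsilon^{(2)}$, i.e. global uniqueness in $X_1$.

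\medskip

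\textbf{Main obstacle.} I expect the hard part to be the last step: obtaining a Lyapunov functional for the nonlinear time-dependent problem and bounding, \emph{uniformly on $\mathbb{R}^+_t$}, the nonlinear terms such as $\partial_s(F\kappa:\lambda(F))$ in a norm compatible with the weak $X_1$/$X_r$ setting. The difficulty is genuine because one only controls the solution in a mixed $L^1(S_2)$-with-extra-smoothness space that is not a standard Sobolev space, so the usual $L^2(S_2)$ energy manipulations are unavailable; the degeneracy of \eqref{EDPH} in the $u$-variable (no ellipticity on $S_2$) means the $-\partial_s^2$ term alone must furnish the dissipation, and the $s$-dependence of $\lambda(F)$ couples frequencies in a way that must be tamed frequency-by-frequency using the $n^3$ and $n$ weights in \eqref{NORM}. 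The low-frequency restriction $r<2$ also intrudes here, exactly as the introduction warns, and will require a separate, more delicate treatment of the first few modes.
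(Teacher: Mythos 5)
Your overall architecture coincides with the paper's: solve the case $\epsilon=0$ frequency by frequency in $L^r(S_2)$ with $r-1$ small, extend to $|\epsilon|$ small by the implicit function theorem in the spaces $X_r$, $Y_r$, and obtain non-negativity and global uniqueness by showing that the stationary solution is the $t\to+\infty$ limit of the evolutionary Doi--Edwards solution, which is a probability density. Two of your steps, however, deviate from what actually works, and the second deviation is a genuine gap.

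First, a minor point: for $\epsilon=0$ no separate treatment of low frequencies is needed. Multiplying $L_n g = \psi$ by $|g|^{r-1}\mathrm{sgn}(g)$ and using Lemma \ref{POK} and Stokes' formula yields the coefficient $n^2\pi^2-\frac{3(r-1)}{r}\kappa\!:\!u\otimes u$, which is uniformly positive \emph{for all} $n\ge 1$ once $r-1$ is small (it is at the \emph{high} frequencies that one could afford $r\ge 2$, cf.\ Remark \ref{3:rem1}). Your plan to "argue separately" for the first few modes points in the wrong direction. Second, and more seriously: for the long-time convergence you propose an $F\log F$ entropy functional combined with the cancellation identity $\int_0^1\!\int_{S_2}[-\epsilon F\kappa\!:\!u\otimes u+\epsilon\partial_s(F\kappa\!:\!\lambda(F))]\,d\mu\,ds=0$; the paper explicitly makes little use of that identity, and no entropy argument appears. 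What actually closes the estimate is the weighted functional $\xi(t)=\sum_n n^2\Vert f_n^d(t)\Vert_{L^1(S_2)}^2$ together with its dissipation counterpart $\chi(t)=\sum_n n^4\Vert f_n^d(t)\Vert_{L^1(S_2)}^2$, obtained by multiplying the equation for each $f_n^d$ by $\mathrm{sgn}(f_n^d)$ and then by $n^2\Vert f_n^d\Vert_{L^1(S_2)}$. The heart of the matter is then the pair of convolution-type estimates (Lemmas \ref{YTRE} and \ref{TRN}, resting on Lemmas \ref{FLOM} and \ref{BRT}) showing that the frequency coupling produced by $\partial_s\big(f\kappa\!:\!\lambda(f)\big)$ contributes at most $C|\epsilon|\chi(t)$, so that for $|\epsilon|$ small the $n^2\pi^2$ dissipation dominates and $\xi(t)\le Ce^{-t/2}$. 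Your plan contains no mechanism for bounding this frequency coupling uniformly in $t$ --- which you yourself flag as the main obstacle --- so as written the final step does not go through. (Also, taking $F_0=f_\epsilon+1/4\pi$ as initial datum is circular for the non-negativity claim; one must start from an arbitrary probability-density datum, as your parenthetical alternative correctly allows.) A similar, though smaller, omission occurs in the implicit function step: the smoothness of $\mathscr{T}$ requires the quantitative bilinear bound $\Vert(B(\phi,\psi))_n\Vert_{L^r(S_2)}\le Cn^{-2}\Vert\phi\Vert_{X_r}\Vert\psi\Vert_{X_r}$ of Lemma \ref{OHM}, which is a product-of-Fourier-series estimate and not an immediate consequence of \eqref{PROD}.
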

In the sequel, 
we often drop the index $\epsilon$ (or $r\geq 1$) in 
the notations. In particular, from now on, we write $f$
in place of $f_{\epsilon}$. 

The above theorem is proved in two steps.
In a first step, the existence of a solution is established
via the implicit function theorem.
The rest of the theorem is 
obtained by showing that solution 
$F$ of problem $\eqref{KKU}-\eqref{YYUU}$ is the limit for 
$t\rightarrow +\infty$ of a family of 
density probablities $\big ( F(t)\big)_{t \geq 0}$, 
namely, the solution of an evolutionary 
Doi-Edwards equation.
%%%%%%%%%%%%%%%%%%%%%%%%%%%%%%%%%%%%%%%%%%%%%
%%%%%%%%%%%%%%%%%%%%%%%%%%%%%%%%%%%%%%%%%%%%
\section{The case $\epsilon$ = 0.}\label{3}
%%%%%%%%%%%%%%%%%%%%%%%%%%%%%%%%%%%%%%%%%%%%%
%%%%%%%%%%%%%%%%%%%%%%%%%%%%%%%%%%%%%%%%%%%%%
We  give results related to the functional spaces
used in this paper. The existence part of theorem 
$\ref{TH}$
for 
$\epsilon = 0$ will follow from a priori estimates in these 
spaces.

\begin{lemma}\label{COM}
For any $r\in [1, +\infty[$, 
$X_r$ is a Banach space, continuously embedded 
in $W^{1, \infty}\big(0, 1, L^r(S_2)\big)$.
Moreover, for any $\phi\in X_r$, we have:
\begin{align}
\phi(s, u) = \sum_{n = 1}^{\infty}\phi_n(u)H_n(s)\label{FOU}
\end{align}
with absolute convergence in $W^{1, \infty}\big(0, 1, L^r(S_2)\big)$.
\end{lemma}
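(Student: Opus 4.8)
The statement asserts that $X_r$ is a Banach space, continuously embedded in $W^{1,\infty}(0,1,L^r(S_2))$, and that every $\phi \in X_r$ has an absolutely convergent Fourier expansion $\phi = \sum_n \phi_n H_n$. I would organize the proof around three points: (i) the Fourier series converges absolutely in $W^{1,\infty}(0,1,L^r(S_2))$ for $\phi \in X_r$; (ii) the norm $\|\cdot\|_{X_r}$ controls the $W^{1,\infty}(0,1,L^r(S_2))$ norm; (iii) $X_r$ is complete.

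\emph{Step 1: absolute convergence of the Fourier series.} For $\phi \in X_r$ and $n \in \mathbb{N}^*$, the condition $\sup_n n^3 \|\phi_n\|_{L^r(S_2)} < \infty$ means $\|\phi_n\|_{L^r(S_2)} \le C n^{-3}$. Since $\|H_n\|_{L^\infty(0,1)} = \sqrt{2}$ and $\|H_n'\|_{L^\infty(0,1)} = \sqrt{2}\,n\pi$, the general term $\phi_n H_n$ satisfies $\|\phi_n H_n\|_{W^{1,\infty}(0,1,L^r(S_2))} \le C(n^{-3} + n \cdot n^{-3}) = C(n^{-3} + n^{-2})$, which is summable. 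Hence the series $\sum_n \phi_n H_n$ converges absolutely (hence normally) in the Banach space $W^{1,\infty}(0,1,L^r(S_2))$; call its sum $\psi$. One then checks $\psi = \phi$: the $m$-th Fourier coefficient $\int_0^1 \psi(s,\cdot) H_m(s)\,ds$ equals $\phi_m$ by orthonormality of $(H_n)$ in $L^2(0,1)$ and continuity of the coefficient functionals on $W^{1,\infty}(0,1,L^r(S_2)) \hookrightarrow L^2(0,1,L^r(S_2))$, so $\psi$ and $\phi$ have the same Fourier coefficients; since both lie in $L^2(0,1,L^r(S_2))$ (say, after integrating $L^r$-valued functions), they coincide. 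This proves \eqref{FOU}, and also the continuous embedding for the "$W^{1,\infty}$ part" of the norm, since $\|\phi\|_{W^{1,\infty}(0,1,L^r(S_2))} \le \sum_n \|\phi_n H_n\|_{W^{1,\infty}} \le C \|\phi\|_{X_r}$.

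\emph{Step 2: completeness.} Let $(\phi^{(k)})_k$ be a Cauchy sequence in $X_r$. Then for each fixed $n$, $(\phi^{(k)}_n)_k$ is Cauchy in $L^r(S_2)$ (with rate $n^{-3}$) and $(\mathcal{G}\cdot \partial \phi^{(k)}_n/\partial u)_k$ is Cauchy in $L^r(S_2)$ (with rate $n^{-1}$), so they converge to limits $\phi_n$ and $\chi_n$ in $L^r(S_2)$, with $\|\phi_n\|_{L^r} \le C n^{-3}$ and $\|\chi_n\|_{L^r} \le C n^{-1}$ uniformly. One must identify $\chi_n = \mathcal{G}\cdot\partial\phi_n/\partial u$; this follows because $\mathcal{G}\cdot\partial/\partial u$ is a closed operator on $L^r(S_2)$ (being a first-order differential operator, closable, with the distributional identification of the limit — or, more cleanly, by testing against smooth functions on $S_2$ and using Stokes' formula \eqref{STOKES1} to move the derivative onto the test function, passing to the limit, and moving it back). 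Define $\phi := \sum_n \phi_n H_n$, which lies in $X_r$ by Step 1 and the uniform bounds; then check $\phi^{(k)} \to \phi$ in $X_r$ by a standard $\varepsilon/2$ argument using the Cauchy estimates frequency by frequency together with their uniformity in $n$.

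\emph{Main obstacle.} The only genuinely delicate point is the closedness/identification of $\mathcal{G}\cdot\partial/\partial u$ in Step 2: one needs that $L^r$-convergence of $\phi^{(k)}_n$ together with $L^r$-convergence of $\mathcal{G}\cdot\partial\phi^{(k)}_n/\partial u$ forces the limit of the derivatives to be the derivative of the limit. Since $\mathcal{G}$ is smooth and bounded on $S_2$ and $\frac{\partial}{\partial u}\cdot$ is the surface divergence, this is handled by duality via \eqref{STOKES1}: for any $X \in C^1(S_2,\mathbb{R}^3)$ with $X\cdot u = 0$, $\int_{S_2} (\mathcal{G}\cdot \partial\phi^{(k)}_n/\partial u)(X\cdot u')\cdots$ — more directly, test $\mathcal{G}\cdot\partial\phi^{(k)}_n/\partial u$ against $\psi \in C^\infty(S_2)$, write it as $-\int_{S_2}\phi^{(k)}_n\,\frac{\partial}{\partial u}\cdot(\psi\mathcal{G})\,d\mu$ (valid since $\psi\mathcal{G}$ is a smooth tangent field), pass to the limit using $\phi^{(k)}_n \to \phi_n$ in $L^r$ and boundedness of $\frac{\partial}{\partial u}\cdot(\psi\mathcal{G})$, obtaining $\int_{S_2}\chi_n\psi\,d\mu = -\int_{S_2}\phi_n\,\frac{\partial}{\partial u}\cdot(\psi\mathcal{G})\,d\mu$, i.e. $\chi_n = \mathcal{G}\cdot\partial\phi_n/\partial u$ in the distributional sense on $S_2$; since $\chi_n \in L^r(S_2)$, this is the required identity. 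Everything else is bookkeeping with the geometric series $\sum n^{-2}$.
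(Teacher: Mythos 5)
Your proposal is correct and follows essentially the same route as the paper: absolute convergence of $\sum_n \phi_n H_n$ in $W^{1,\infty}\big(0,1,L^r(S_2)\big)$ from the bound $\Vert \phi_n H_n\Vert_{W^{1,\infty}} \leq C(1+n\pi)n^{-3}\Vert\phi\Vert_{X_r}$, identification of the sum with $\phi$ via Fourier coefficients against $H_N$ and $\psi\in L^{r'}(S_2)$, and completeness by frequency-by-frequency Cauchy estimates. Your only addition is to spell out, via Stokes' formula and duality, the identification $\chi_n=\mathcal{G}\cdot\partial\phi_n/\partial u$ of the limit of the derivatives, a step the paper compresses into ``by identification''; this is a welcome clarification rather than a different argument.
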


\begin{proof}
It is clear that $\Vert.\Vert_{X_r}$ is a seminorm 
on the vectorial space $X_r$. The fact that 
$\Vert.\Vert_{X_r}$ is a norm will be a 
straightforward consequence of equality 
$\eqref{FOU}$.

Let $\phi\in X_r$ and $n\in\mathbb{N^*}$. Then:
\begin{align}
\Vert\phi_n 
%\otimes 
H_n\Vert_{W^{1, \infty}\big(0, 1, L^r(S_2)\big)}
\leq & C_1(1+\pi n) \dfrac{\Vert \phi \Vert_{X_r}}{n^3}\nonumber\\
\leq & C_2 \dfrac{\Vert \phi \Vert_{X_r}}{n^2}\nonumber
\end{align}
It implies that 
$\sum_{n = 1}^{\infty}\phi_n(u)H_n(s)$
is absolutely convergent in 
$W^{1, \infty}\big(0, 1, L^r(S_2)\big)$.

Now, for any $\psi\in L^{r'}(S_2)$, $ r^{-1} + r'^{-1} = 1$ and 
$N\in \mathbb{N^*}$, we have: 

\begin{align}
&\int_0^1\int_{S_2}\Big[ \phi(s, u)-
\sum_{n = 1}^{\infty}\phi_n(u)H_n(s)\Big]
H_N(s)\psi(u)dsd\mu(u)\nonumber\\
=&\int_{S_2}\phi_N(u)\psi(u)d\mu(u)
-
\sum_{n = 1}^{\infty}
\int_{S_2}\phi_n(u)\psi(u)d\mu(u)\int_0^1H_n(s)H_N(s)ds
 = 0
\end{align}
due to the absolute convergence of 
$\sum_{n = 1}^{\infty}\phi_n(u)\psi(u)H_n(s)H_N(s)$ 
in $L^{\infty}\big(0, 1, L^1(S_2)\big)$. This proves 
$\eqref{FOU}$  and the fact that $X_r$ 
is  continuously embedded 
in $W^{1, \infty}\big(0, 1, L^r(S_2)\big)$.

It remains to prove the completness of the space 
$\big(X_r, \Vert.\Vert_{X_r}\big)$. Let 
$(\phi^{p})_{p\in \mathbb{N}}$ be 
a Cauchy sequence in 
$X_r$.{}
Since $X_r$ 
is  continuously embedded 
in $W^{1, \infty}\big(0, 1, L^r(S_2)\big)$, 
$(\phi^{p})_{p\in \mathbb{N}}$
is also a Cauchy sequence in $W^{1, \infty}\big(0, 1, L^r(S_2)\big)$.
We denote by $\phi$ its limit in $W^{1, \infty}\big(0, 1, L^r(S_2)\big)$.
For any $n\in\mathbb{N^*}$:
\begin{align}
\Vert \phi_n^p -\phi_n\Vert_{L^r(S_2)}
\leq \sqrt{2} \int_0^1
\Vert \phi^p - \phi\Vert_{L^r(S_2)}(s)ds
\rightarrow 0\phantom{..}when\phantom{..}p \rightarrow +\infty\nonumber
\end{align}
Hence, $\phi_n^p \rightarrow\phi_n$ in $L^r(S_2)$
when $p \rightarrow +\infty$,
uniformely in $n\in\mathbb{N^*}$.
We also have that
$\Big(\mathcal{G}\cdot\dfrac{\partial \phi_n^p }{\partial u}\Big
)_{p\in\mathbb{N^*}}$ 
%($n\in\mathbb{N^*}$ is fixed) 
is a Cauchy sequence
in $L^r(S_2)$, hence convergent in $L^r(S_2)$. By identification, 
we deduce that 
$\mathcal{G}\cdot\dfrac{\partial \phi_n }{\partial u}$ 
belongs to $L^r(S_2)$ and that 
$\mathcal{G}\cdot\dfrac{\partial \phi_n^p }{\partial u}
\rightarrow
\mathcal{G}\cdot\dfrac{\partial \phi_n }{\partial u}$ in $L^r(S_2)$
for $p\rightarrow +\infty$.
From the inequalities: $$\Vert\phi_n^p-\phi_n^q\Vert_{L^r(S_2)}
\leq (1/n^3)\Vert \phi^p-\phi^q\Vert_{X_r}$$ and 
$$\Big\Vert 
\mathcal{G}\cdot\dfrac{\partial \phi_n^p }{\partial u}
-
\mathcal{G}\cdot\dfrac{\partial \phi_n^q }{\partial u}
\Big\Vert_{L^r(S_2)}
\leq \dfrac{1}{n}\Vert \phi^p-\phi^q\Vert_{X_r}
$$
we classically deduce, taking $q\rightarrow+\infty$, 
that $\phi\in X_r$ and $\phi^p \rightarrow \phi$ in 
$X_r$ for $p \rightarrow +\infty$.
\end{proof}
\begin{remark}\label{reu}
Formula $\eqref{FOU}$ implies that 
$\phi(s=0)=\phi(s=1)=0$ 
for any $\phi \in X_r$.
\end{remark}
Let us define for 
any $r\geq 1$ the space:
$$
Z_r=\{ \phi \in L^r(S_2) \textrm{ such that }
\mathcal{G}\cdot\dfrac{\partial \phi}{\partial u}
\in L^r(S_2)
\}
$$
which is clearly a Banach space  with norm 
$$
\Vert \phi \Vert_{X_r}
= 
\Vert\phi \Vert_{L^r(S_2)}+\Big\Vert
\mathcal{G}\cdot\dfrac{\partial \phi}{\partial u} \Big\Vert_{L^r(S_2)}
$$
The space $Z_r$ will be used in the  existence proof  
for $\epsilon = 0$. In order to perform estimates in $Z_r$, 
we first establish a useful formula (lemma $\ref{POK}$). 
Since this formula shall also be used for the 
%general 
evolution
Doi Edwards equation, we add the variable $t$ in the 
statement. Notice also that  
lemma $\ref{POK}$ can
not be reduced locally to the case 
$\mathcal{G}_{local\phantom{k}chart} = Cst$ due to the zeros of 
$\mathcal{G}$ on $S_2$.

\begin{lemma}\label{POK}
For any $T>0$, $r\geq1$ and 
$\phi \in L^r(]0, T[\times S_2)$
with 
$\mathcal{G}\cdot\dfrac{\partial \phi}{\partial u} 
\in L^r(]0, T[\times S_2)$
we have

\begin{align}\label{QS}
r\vert \phi \vert^{r-1}sgn(\phi) \mathcal{G}
\cdot\dfrac{\partial \phi}{\partial u}  
=
\mathcal{G}
\cdot\dfrac{\partial }{\partial u}
(\vert\phi\vert^r)
\end{align}
\end{lemma}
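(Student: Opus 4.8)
The plan is to prove the identity \eqref{QS} first for smooth $\phi$ by a direct pointwise chain-rule computation, and then to extend it to the general class by a density/regularization argument that respects the hypothesis $\mathcal{G}\cdot\partial\phi/\partial u\in L^r$. For $\phi$ smooth and bounded away from zero one simply writes $\partial(|\phi|^r)/\partial u = r|\phi|^{r-1}\mathrm{sgn}(\phi)\,\partial\phi/\partial u$ as tangential gradients on $S_2$, contract with $\mathcal{G}$, and \eqref{QS} follows. The subtlety on the zero set of $\phi$ is handled by the standard trick of replacing $|\phi|^r$ by $(\phi^2+\delta)^{r/2}$: then $\partial/\partial u\,(\phi^2+\delta)^{r/2} = r\phi(\phi^2+\delta)^{r/2-1}\,\partial\phi/\partial u$ is a genuine identity of continuous functions, one contracts with $\mathcal{G}$, and lets $\delta\to 0^+$, using $r\geq 1$ to dominate $r|\phi|(\phi^2+\delta)^{r/2-1}\leq r|\phi|^{r-1}$ (interpreted as $0$ where $\phi=0$ when $r=1$) for the convergence. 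This settles the smooth case.

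For the general case I would regularize: approximate $\phi$ in $L^r(]0,T[\times S_2)$ by smooth functions $\phi_k$ (e.g. via a partition of unity and local mollification on $S_2$, keeping the $t$-variable as a passive parameter) in such a way that simultaneously $\mathcal{G}\cdot\partial\phi_k/\partial u\to\mathcal{G}\cdot\partial\phi/\partial u$ in $L^r$. The latter is the delicate point and is exactly where I expect the main obstacle: $\mathcal{G}$ vanishes on $S_2$ (at $u=\pm$ the eigendirections of the relevant quadratic form), so $\mathcal{G}\cdot\partial/\partial u$ is a degenerate first-order operator and one cannot simply mollify $\partial\phi/\partial u$ componentwise. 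The way around this is to treat $\mathcal{G}\cdot\partial/\partial u$ as the (weighted) directional derivative along the vector field $\mathcal{G}$ and use a Friedrichs-type commutator lemma: if $\rho_k$ is a mollifier then $[\mathcal{G}\cdot\partial/\partial u,\ \rho_k*\,]\phi\to 0$ in $L^r$ because the commutator involves only one derivative of $\mathcal{G}$ (which is smooth) against $\phi\in L^r$. Thus $\mathcal{G}\cdot\partial\phi_k/\partial u = \rho_k*(\mathcal{G}\cdot\partial\phi/\partial u) + (\text{commutator}) \to \mathcal{G}\cdot\partial\phi/\partial u$ in $L^r$.

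Once $\phi_k\to\phi$ and $\mathcal{G}\cdot\partial\phi_k/\partial u\to\mathcal{G}\cdot\partial\phi/\partial u$ in $L^r$, I pass to the limit in the already-established smooth identity $r|\phi_k|^{r-1}\mathrm{sgn}(\phi_k)\,\mathcal{G}\cdot\partial\phi_k/\partial u = \mathcal{G}\cdot\partial(|\phi_k|^r)/\partial u$. For the left-hand side one uses that $|\phi_k|^{r-1}\mathrm{sgn}(\phi_k)$ is bounded by $|\phi_k|^{r-1}$ and converges a.e. (along a subsequence) while $\mathcal{G}\cdot\partial\phi_k/\partial u$ converges in $L^r$; a Vitali/equi-integrability argument, or testing against $L^\infty$ functions and using $|\phi_k|^{r-1}\in L^{r/(r-1)}$ with $\phi_k\to\phi$ also in $L^r$, gives convergence in the sense of distributions to $r|\phi|^{r-1}\mathrm{sgn}(\phi)\,\mathcal{G}\cdot\partial\phi/\partial u$. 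For the right-hand side, $|\phi_k|^r\to|\phi|^r$ in $L^1$, so $\mathcal{G}\cdot\partial(|\phi_k|^r)/\partial u\to\mathcal{G}\cdot\partial(|\phi|^r)/\partial u$ in the sense of distributions (the operator being continuous from $L^1$ to $\mathcal{D}'$ since $\mathcal{G}$ is smooth). Identifying the two distributional limits yields \eqref{QS}, and since both sides lie in $L^1(]0,T[\times S_2)$ the identity holds a.e. The only points requiring care are the commutator estimate near the zeros of $\mathcal{G}$ and the justification that $r|\phi|^{r-1}\mathrm{sgn}(\phi)\,\mathcal{G}\cdot\partial\phi/\partial u$ is well defined in $L^1$, which follows from Hölder since $|\phi|^{r-1}\in L^{r'}$ and $\mathcal{G}\cdot\partial\phi/\partial u\in L^r$.
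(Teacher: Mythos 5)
Your overall strategy coincides with the paper's: reduce to a flat local statement via charts, regularize along the degenerate vector field with a Friedrichs/DiPerna--Lions commutator argument (the paper cites Lemma II.1 of DiPerna--Lions for exactly the approximation property you describe), and smooth the absolute value by $h_\delta(y)=\sqrt{y^2+\delta}$. The difference is the order of the two limits, and that order is where your argument has a genuine gap. You first send $\delta\to 0$ within the smooth class, obtaining the identity $r|\phi_k|^{r-1}\mathrm{sgn}(\phi_k)\,\mathcal{G}\cdot\partial\phi_k/\partial u=\mathcal{G}\cdot\partial(|\phi_k|^r)/\partial u$ for smooth $\phi_k$, and only then mollify. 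For $r>1$ the map $y\mapsto |y|^{r-1}\mathrm{sgn}(y)$ is continuous, so a.e.\ convergence plus domination lets you pass to the limit on the left-hand side. But for $r=1$ the left-hand factor is $\mathrm{sgn}(\phi_k)$, which is discontinuous at $0$: on the set $\{\phi=0\}$ there is no reason for $\mathrm{sgn}(\phi_k)$ to converge a.e.\ to $\mathrm{sgn}(\phi)=0$. Along a subsequence you only get $\mathrm{sgn}(\phi_k)\rightharpoonup\theta$ weak-$*$ in $L^\infty$ with $\theta=\mathrm{sgn}(\phi)$ on $\{\phi\neq 0\}$ but $\theta$ unknown on $\{\phi=0\}$, so the limit identity reads $\theta\,\mathcal{G}\cdot\partial\phi/\partial u=\mathcal{G}\cdot\partial|\phi|/\partial u$. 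To upgrade this to the claimed identity you would need $\mathcal{G}\cdot\partial\phi/\partial u=0$ a.e.\ on $\{\phi=0\}$, which for this degenerate weak derivative is essentially the content of the lemma at $r=1$ --- so the argument becomes circular precisely in the case $r=1$ that the paper actually uses later (Lemma 5.1 and all of Section 6 invoke the lemma with $r=1$).

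The fix is the ordering the paper adopts: keep $\delta>0$ fixed, write the smooth identity with $j_\delta(y)=y/\sqrt{y^2+\delta}$ and $h_\delta$ in place of $\mathrm{sgn}$ and $|\cdot|$, pass the mollification limit first (now $j_\delta$ and $h_\delta^{r-1}$ are continuous, with $j_\delta(0)=0$, so a.e.\ convergence and domination by $(\psi^*+\delta)^{r-1}\in L^{r'}$ go through for every $r\geq 1$), and only at the very end send $\delta\to 0$ in an identity that involves $\phi$ alone, where $j_\delta(\phi)\to\mathrm{sgn}(\phi)$ pointwise everywhere (including on $\{\phi=0\}$) and dominated convergence applies. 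A secondary, more cosmetic point: your domination $r|\phi|(\phi^2+\delta)^{r/2-1}\leq r|\phi|^{r-1}$ only holds for $1\leq r\leq 2$ (for $r>2$ the inequality reverses); the paper instead uses $(\,(\psi^*)^2+\delta)^{(r-1)/2}\leq(\psi^*+ \delta)^{r-1}$ for the $n$-limit and $h_\delta(\phi)\leq|\phi|+1$ for $\delta\leq 1$ in the $\delta$-limit. This does not affect your smooth case (where $\phi$ is bounded), but it matters if the same domination is reused in the limit passage for general $\phi$ and large $r$.
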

\begin{proof}
Using local charts, this amounts essentially 
to prove that for any open bounded set 
$\Omega \subset \mathbb{R}^3$, 
$A\in C^{\infty}(\Omega, \mathbb{R}^3)$, 
$\psi \in  L^r(\Omega)$ 
with 
$A\cdot \nabla\psi \in L^r(\Omega)$, 
we have:
\begin{align}\label{FF}
r\vert \psi \vert^{r-1}sgn(\psi) A
\cdot
\nabla \psi 
%\dfrac{\partial \psi}{\partial u}  
=
A \cdot
%\dfrac{\partial }{\partial u}
\nabla
(\vert\psi\vert^r)
\end{align}
Let us consider a sequence 
$\big(\psi_n\big)_{n\in\mathbb{N}}$ 
in $C^{\infty}(\Omega)$
endowed with the two following properties 
(see Lemma II.1 of \cite{dpl89}):
\begin{align}
&\psi_n\rightarrow\psi \textrm{ in } L^r(\Omega) \textrm{ for } n\rightarrow +\infty \label{DP1}\\
&A\cdot\nabla\psi_n \rightarrow A\cdot\nabla\psi \textrm{ in } L^r(\Omega) 
\textrm{ for } n\rightarrow +\infty \label{DP2}
\end{align}
and,  for any $\delta>0$, define functions 
$h_{\delta}:\mathbb{R}\rightarrow \mathbb{R}$ and 
$j_{\delta}:\mathbb{R}\rightarrow \mathbb{R}$
by $h_{\delta}(y)=\sqrt{y^2+\delta}$ and 
$j_{\delta}(y)=y/\sqrt{y^2+\delta}$. We classically have: 
\begin{align}\label{KO}
r\big[h_{\delta}( \psi_n )\big]^{r-1}j_{\delta}(\psi_n ) A
\cdot \nabla\psi_n 
=
A
\cdot\nabla\Big[
\big(h_{\delta}(\psi_n)\big)^r\Big]
\end{align}
($\delta>0, n\in\mathbb{N^*}$).

We extract a subsequence of $\big(\psi_n\big)_{n\in\mathbb{N}^*}$, still denoted 
by $\big(\psi_n\big)_{n\in\mathbb{N}^*}$, such that:
\begin{align}
&\psi_n \rightarrow \psi \textrm{ a.e } 
\textrm{ for }n\rightarrow +\infty\label{KP}\\
&\vert\psi_n\vert \leq 
\psi^*  \textrm{ a.e, with } \psi^* \in L^r(\Omega)\label{KQ}
\end{align}
Fix $\delta>0$. Our goal is to pass to the limit when 
$n\rightarrow +\infty$ in $\eqref{KO}$. 
We have:
\begin{align}
\vert
h_{\delta}(\psi_n)
-
h_{\delta}(\psi)\vert
\leq&
\dfrac{\vert\psi_n-\psi\vert
\vert\psi_n+\psi\vert}
{\sqrt{\psi_n^2+\delta}+\sqrt{\psi^2+\delta}}\nonumber\\
\leq&
\vert\psi_n-\psi\vert\nonumber
\end{align}
Hence, from $\eqref{DP1}$, we get $\big[ h_{\delta}(\psi_n)\big]^r
\rightarrow \big[h_{\delta}(\psi)\big]^r$ for 
$n\rightarrow \infty$. It implies that, for $n 
\rightarrow +\infty$: 
\begin{align}
A\cdot\nabla\Big[
h_{\delta}(\psi_n)^r
\Big]
\rightarrow
A\cdot\nabla\Big[
h_{\delta}(\psi)^r
\Big] \textrm{ in }\mathscr{D}'(\Omega)\label{JAQ}
\end{align}
Observe that:
\begin{align}
\vert
j_{\delta}(\psi_n)\vert
\vert h_{\delta}(\psi_n)\vert^{r-1}
\leq& 
\big[(\psi^*) ^2+\delta\big]^{(r-1)/2}\nonumber\\
\leq&
\big[\psi^*+\delta\big]^{r-1}\in L^{r'}(\Omega)\label{MIM}
\end{align}
with $r'\in [1, +\infty]$
such that $r^{-1}+r'^{-1} = 1$.
For $r>1$, using the dominated convergence theorem, 
we deduce from $\eqref{KP}$ and $\eqref{MIM}$ that:
\begin{align}
j_{\delta}(\psi_n)h_{\delta}(\psi_n)^{r-1}
\rightarrow
j_{\delta}(\psi)h_{\delta}(\psi)^{r-1}\textrm{ in }
L^{r'}(\Omega) \textrm{ for } n\rightarrow +\infty\label{MOW}
\end{align}
Using $\eqref{DP2}$ and $\eqref{MOW}$, 
we conclude that, for $r>1$:
\begin{align}
j_{\delta}(\psi_n)h_{\delta}(\psi_n)^{r-1}
A\cdot\nabla\psi_n 
\rightarrow
j_{\delta}(\psi)h_{\delta}(\psi)^{r-1}
A\cdot\nabla\psi 
\textrm{ in }
L^{1}(\Omega) \textrm{ for } n\rightarrow +\infty\label{MEW}
\end{align}
For $r=1$, we easily obtain: 
\begin{align}
j_{\delta}(\psi_n)
A\cdot\nabla\psi_n 
\rightarrow
j_{\delta}(\psi)
A\cdot\nabla\psi 
\textrm{ in }
L^{1}(\Omega) \textrm{ for } n\rightarrow +\infty\label{MAW}
\end{align}
We deduce from $\eqref{KO}$, $\eqref{JAQ}$, $\eqref{MEW}$, $\eqref{MAW}$, 
that:
\begin{align}\label{KA}
r\big[h_{\delta}(\psi)\big]^{r-1}j_{\delta}(\psi) A
\cdot \nabla\psi
=
A
\cdot\nabla\Big[
\big(h_{\delta}(\psi)\big)^r\Big]
\end{align}
for $r \geq 1$.  In order to pas to the limit 
$\delta \rightarrow 0$ in the above equality, notice that: 
\begin{align}
\vert h_{\delta}(\psi)\vert
\leq \vert\psi\vert + 1 \textrm { for }
\delta \leq 1\label{ADA}
\end{align}
For $\delta \rightarrow 0$, we have $h_{\delta} \rightarrow \vert.\vert$ everywhere. 
Due to 
$\eqref{ADA}$, $\psi \in L^r(\Omega)$ and the dominated convergence 
theorem, we conclude that, for any $r \geq 1$:
\begin{align}
h_{\delta}(\psi)^r \rightarrow \vert\psi\vert^r 
\textrm{ in } L^1(\Omega)\textrm{ when }
\delta \rightarrow 0\label{BTJ}
\end{align}
for any $r \geq 1$. Arguing similarly, we also prove that:
\begin{align}
j_{\delta}(\psi)h_{\delta}(\psi)^{r-1}A.\nabla \psi \rightarrow
sgn(\psi)\vert\psi\vert^{r-1}
A.\nabla \psi
\textrm{ in } L^{1}(\Omega)\textrm{ when }
\delta \rightarrow 0\label{BTK}
\end{align}
Using $\eqref{BTJ}$, $\eqref{BTK}$ and $\eqref{KA}$, 
we obtain the result.
\end{proof}
Let us introduce for any $r \geq 1$  the space:
\begin{align}
Y_r =  \{
(a_n)_{n\in\mathbb{N}^*}
\textrm{ such that: } 
\forall
n\in\mathbb{N}^*,
a_n\in L^r(S_2) 
\textrm{ and }
\displaystyle\sup_{n\in\mathbb{N^{*}}}\Big(n\Vert
a_n
\Vert_{L^r(S_2)}\Big) < \infty
\} \nonumber
\end{align}
It is clear that $Y_r$,
when endowed with its natural norm: 
\begin{align}
\Vert(a_n)_{n\in\mathbb{N}^*}\Vert_{Y_r} = 
\displaystyle\sup_{n\in\mathbb{N^{*}}}\Big(n\Vert
a_n
\Vert_{L^r(S_2)}\Big)\nonumber
\end{align}
is a Banach space. Next, we introduce the linear, bounded 
operator $\mathscr{T}_0: X_r \rightarrow Y_r$ defined for any
$g\in X_r$ by $\mathscr{T}_0(g) = (a_n)_{n\in\mathbb{N}^*}$ with:

\begin{align}
a_n = n^2 \pi^2 g_n + 
\frac \partial {\partial u} \cdot (\mathcal{G} g_n) 
\nonumber
\end{align}

where we recall that
\begin{align}
g_n = \int_0^1g(s)H_n(s)ds\label{DYP}
\end{align}
This operator is formally obtained 
by projecting 
the left-hand side of equation $\eqref{EDPH}$ for 
$\epsilon = 0$ on the Hilbertian basis 
$(H_n)_{n\in\mathbb{N}^*}$ of $L^2(]0, 1[)$.

In order to study $\mathscr{T}_0$, 
we first introduce the following 
unbounded linear operator defined 
for any $n\in\mathbb{N}^*$ and 
$r>1$ by:$$L_n: 
%D(L_n)\subset 
L^r(S_2)\rightarrow L^r(S_2)$$
where $D(L_n) = Z_r$ and for  any $h \in Z_r$: 
$$ L_n(h)=n^2\pi^2h+\dfrac{\partial}{\partial u}\cdot(\mathcal{G}h)$$
It is clear that $L_n$ is closed and densely defined.
\\
Let us consider $ r' > 1 $ such that  
\begin{equation}
\label{r-prim}
\frac 1 r + \frac 1 {r'} = 1.
\end{equation}
One can easily prove that 
the adjoint operator 
$$L_n^{*}: 
%D(L_n^{*})\subset 
L^{r'}(S_2)\rightarrow L^{r'}(S_2)$$ is such that
$D(L_n^{*}) = Z_{r'}$ and for any  $\psi \in Z_{r'}$: 
$$ L_n^{*}(\psi)=n^2\pi^2\psi-\mathcal{G}\cdot\dfrac{\partial \psi}{\partial u}$$
\begin{lemma}\label{ESP}
There exists $r_0>1$ such that: 
$L_{n}:  Z_r \rightarrow L^r(S_2)$ is a Banach isomorphism
for any $r\in]1, r_0[$ and $n\in\mathbb{N}^*$. 
Moreover, there exists $C>0$ such that 
\begin{align}
&\Vert L_n^{-1}(\psi) \Vert_{Z_r}\leq C \Vert \psi \Vert_{L^r(S_2)}\label{BV1}\\
&\Vert L_n^{-1}(\psi) \Vert_{L^r(S_2)}\leq 
\dfrac{C}{n^2}
\Vert\psi\Vert_{L^r(S_2)}\label{BV2}
\end{align}
for any $\psi \in L^r(S_2)$, $n\in\mathbb{N}^*$
and $r\in]1, r_0[$.
\end{lemma}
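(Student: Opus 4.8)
The plan is to obtain \eqref{BV2} from an $L^r$ energy estimate on $L_n$, deduce \eqref{BV1} algebraically, and get surjectivity from the closed range theorem once the same estimate has been established for the adjoint $L_n^{*}$. Throughout I write $m$ for a matrix norm of $\kappa$, so that $|\kappa:u\otimes u|\le m$ on $S_2$, and I use \eqref{DKU} to rewrite $\frac{\partial}{\partial u}\cdot(\mathcal{G}h)=\mathcal{G}\cdot\frac{\partial h}{\partial u}-3(\kappa:u\otimes u)h$ for $h\in Z_r$; thus $L_n h=\psi$ reads $n^2\pi^2 h+\mathcal{G}\cdot\frac{\partial h}{\partial u}-3(\kappa:u\otimes u)h=\psi$.

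\textbf{Estimate for $L_n$.} First I would fix $r>1$, $n\in\mathbb{N}^{*}$, $h\in Z_r$, and set $\psi=L_n h$. Since $|h|^{r-1}\in L^{r'}(S_2)$, I multiply the equation by $r|h|^{r-1}sgn(h)$ and integrate over $S_2$. By Lemma \ref{POK} the first order term becomes $\int_{S_2}\mathcal{G}\cdot\frac{\partial}{\partial u}(|h|^r)\,d\mu$, and Stokes' formula \eqref{STOKES2} applied to $\mathcal{G}|h|^r$ (legitimate here after the same DiPerna--Lions regularisation used to prove Lemma \ref{POK}) together with \eqref{DKU} turns this into $3\int_{S_2}(\kappa:u\otimes u)|h|^r\,d\mu$. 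This yields
\begin{align}
r n^2\pi^2\|h\|_{L^r(S_2)}^r+3(1-r)\int_{S_2}(\kappa:u\otimes u)|h|^r\,d\mu=r\int_{S_2}\psi\,|h|^{r-1}sgn(h)\,d\mu,\nonumber
\end{align}
hence $r n^2\pi^2\|h\|_{L^r}^r\le r\|\psi\|_{L^r}\|h\|_{L^r}^{r-1}+3(r-1)m\,\|h\|_{L^r}^r$. I then choose $r_0>1$ with $3(r_0-1)m\le\tfrac12\pi^2$; for every $r\in(1,r_0)$ and every $n\ge1$ one has $3(r-1)m\le\tfrac12 r n^2\pi^2$, the last term is absorbed, and $\|h\|_{L^r}\le\tfrac{2}{n^2\pi^2}\|\psi\|_{L^r}$, which is \eqref{BV2}. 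Since $\mathcal{G}\cdot\frac{\partial h}{\partial u}=\psi-n^2\pi^2 h+3(\kappa:u\otimes u)h$, this also gives $\|\mathcal{G}\cdot\frac{\partial h}{\partial u}\|_{L^r}\le\|\psi\|_{L^r}+(n^2\pi^2+3m)\tfrac{2}{n^2\pi^2}\|\psi\|_{L^r}\le C\|\psi\|_{L^r}$, hence \eqref{BV1}. In particular $L_n:Z_r\to L^r(S_2)$ is injective and bounded below, and since it is closed its range is closed.

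\textbf{Surjectivity by duality.} Next I would run the same computation on the adjoint $L_n^{*}\psi=n^2\pi^2\psi-\mathcal{G}\cdot\frac{\partial\psi}{\partial u}$, $D(L_n^{*})=Z_{r'}$: multiplying $L_n^{*}\psi$ by $r'|\psi|^{r'-1}sgn(\psi)$ and using Lemma \ref{POK}, \eqref{STOKES2} and \eqref{DKU} gives $r' n^2\pi^2\|\psi\|_{L^{r'}}^{r'}-3\int_{S_2}(\kappa:u\otimes u)|\psi|^{r'}\,d\mu=r'\int_{S_2}(L_n^{*}\psi)|\psi|^{r'-1}sgn(\psi)\,d\mu$, whence $r' n^2\pi^2\|\psi\|_{L^{r'}}^{r'}\le r'\|L_n^{*}\psi\|_{L^{r'}}\|\psi\|_{L^{r'}}^{r'-1}+3m\|\psi\|_{L^{r'}}^{r'}$. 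Here no smallness of $r'-1$ is needed: since $r<r_0$ one has $r'>r_0/(r_0-1)$, so by shrinking $r_0$ towards $1$ if necessary I may assume $r_0/(r_0-1)\ge 6m/\pi^2$, giving $3m\le\tfrac12 r' n^2\pi^2$ for all $n\ge1$ and thus $\|\psi\|_{L^{r'}}\le\tfrac{2}{n^2\pi^2}\|L_n^{*}\psi\|_{L^{r'}}$. Hence $L_n^{*}$ is injective, so $\mathrm{Range}(L_n)^{\perp}=\ker L_n^{*}=\{0\}$ and $\mathrm{Range}(L_n)$ is dense; being also closed, $\mathrm{Range}(L_n)=L^r(S_2)$. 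Therefore $L_n$ is a continuous bijection from $Z_r$ onto $L^r(S_2)$, its inverse is bounded, and the quantitative bounds \eqref{BV1}--\eqref{BV2}, uniform in $n\in\mathbb{N}^{*}$ and $r\in(1,r_0)$, are exactly those of the first step.

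\textbf{Main obstacle.} The genuine difficulty, already flagged in the remark preceding the statement, is that the zeros of $\mathcal{G}$ on $S_2$ preclude a local reduction to constant coefficients; this is circumvented only through the global renormalisation identity of Lemma \ref{POK}, which is the real workhorse of the estimate (and the point where the measure-theoretic care with $L^r$ integrands matters). The second delicate point is surjectivity: instead of a direct construction I rely on the closed range theorem, which forces the twin estimate for $L_n^{*}$ on $L^{r'}$. The estimate for $L_n$ is the binding one: absorbing the zeroth order term $-3(\kappa:u\otimes u)h$ uniformly in $n\ge1$ is precisely what dictates the restriction to $r$ close to $1$ (quantitatively, $r-1$ small compared with $\pi^2/\|\kappa\|$), whereas for $L_n^{*}$ the analogous term is harmless because the dual exponent $r'$ is then large.
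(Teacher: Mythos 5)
Your proof is correct and follows essentially the same route as the paper: the $L^r$ energy identity via Lemma \ref{POK}, Stokes' formula and \eqref{DKU}, absorption of the zeroth-order term for $r-1$ small to get injectivity and \eqref{BV2} (hence \eqref{BV1} from the equation), and the twin estimate on $L_n^{*}$ in $L^{r'}$ to obtain surjectivity by duality. The only cosmetic difference is that you split surjectivity into ``closed range from the direct estimate'' plus ``dense range from $\ker L_n^{*}=\{0\}$'', whereas the paper invokes the a priori bound on $L_n^{*}$ directly; these are equivalent.
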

\begin{proof}
Let $r' > 1$ satisfying \eqref{r-prim}.
The surjectivity of $L_n$ is a 
consequence of the following a priori
estimate:
\begin{align}
\forall \varphi \in Z_{r'}, \Vert \varphi \Vert_{L^{r'}(S_2)}
\leq C_1 \Vert L_{n}^*(\varphi)\Vert_{L^{r'}(S_2)}\label{BBB}
\end{align}
In order to prove $\eqref{BBB}$, set $h = L_n^{*}(\varphi)$.
We have:
\begin{align}
n^2\pi^2\varphi - \mathcal{G} \cdot \dfrac{\partial \varphi}{\partial u} =h
\end{align}

We multiply this inequality by 
$\vert \varphi\vert^{r'-1}sgn(\varphi)$, integrate over 
$S_2$ and use lemma  $\ref{POK}$ and we get:
\begin{align}
n^2\pi^2\int_{S_2}\vert \varphi \vert^{r'} d\mu - \dfrac{1}{r'}
\int_{S_2} \mathcal{G}\cdot \dfrac{\partial 
(\vert \varphi^{r'} \vert)}{\partial u}d\mu = 
\int_{S_2} h\vert \varphi \vert^{r'-1}sgn(\varphi)d\mu
\end{align}
Using the Stokes formula and Holder inequality 
we obtain:
\begin{align}
\int_{S_2}\Big (n^2\pi^2-\dfrac{3}{r'}\kappa: u \otimes u \Big)
\vert \varphi \vert^{r'} d\mu 
\leq
\Vert h \Vert_{L^{r'}(S_2)} \Vert\varphi\Vert_{L^{r'}(S_2)}^{r'-1}\nonumber
\end{align}
Taking $r'$ large enough, that is $r-1$ small enough
%Restricting to $r'-1$ small enough, 
we get 
$\eqref{BBB}$, which proves that $L_n$ is
%onto.
surjective.

We now prove the injectivity of $L_n$.
Let us denote $\psi = L_n(g)$, with 
$g \in Z_r$, $\psi\in L^{r}(S_2)$. Hence:

\begin{align}
n^2\pi^2g + \dfrac{\partial}{\partial u}\cdot  
\big(\mathcal{G}g\big) = \psi\label{FNK}
\end{align}
Using lemma $\ref{POK}$, we get:
\begin{align}
\vert g \vert^{r-1} sgn(g)\dfrac{\partial}{\partial u}\cdot  
\big(\mathcal{G}g\big)
&=
\vert g \vert^r 
\dfrac{\partial}{\partial u}\cdot  
\mathcal{G}
+
\dfrac{1}{r}\mathcal{G}\cdot
\dfrac{\partial}{\partial u}
\big( \vert g \vert^r \big)\nonumber\\
&
=\dfrac{\partial}{\partial u}
\cdot 
\big(\mathcal{G}\vert g \vert^r  \big)
+
\big(
\dfrac{1}{r}-1
\big)
\mathcal{G}
\cdot
\dfrac{\partial}{\partial u}
\big(
\vert g\vert^r 
\big)
\label{XLM}
\end{align}
We now multiply 
$\eqref{FNK}$ by $\vert g \vert^{r-1} sgn(g)$
and integrate over
$S_2$ to get:
\begin{align}
\int_{S_2}\Big(n^2\pi^2 
-
\dfrac{3(r-1)}{r}\kappa:u\otimes u
\Big)
\vert g \vert^{r}d\mu
\leq 
\Vert \psi \Vert_{L^{r}(S_2)}\Vert g \Vert_{L^{r}(S_2)}^{r-1}
\end{align}
Taking again $r-1$ small enough we obtain 
at the same time that 
$L_n$ is one to one and estimate $\eqref{BV2}$. Estimate
$\eqref{BV1}$ follows from equality 
$$\mathcal{G}\cdot \dfrac{\partial g}{\partial u} 
= 
\psi -\big( n^2\pi^2 - 3\kappa:u\otimes u\big)g $$ 
(see eqs. $\eqref{FNK}$ and $\eqref{DKU}$) and estimate $\eqref{BV2}$.
\end{proof}
\begin{remark} 
\label{3:rem1}
In the above proof, we can choose $r\geq 2$
for large $n\in\mathbb{N}^*$.
\end{remark}

Since for any $g\in X_r$ we have 
$\big( \mathscr{T}_0(g)\big)_n = L_n(g_n)$
where $g_n$ is given by $\eqref{DYP}$, we easily obtain 
the following:
\begin{corollary}\label{ZER}
There exists $r_0>1$ such that for any 
$r\in \, ]1, r_0[$, $\mathscr{T}_0$ is a Banach isomorphism.
\end{corollary}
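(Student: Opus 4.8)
The plan is to deduce Corollary \ref{ZER} directly from Lemma \ref{ESP} by assembling the frequency-by-frequency isomorphisms $L_n : Z_r \to L^r(S_2)$ into a single operator $\mathscr{T}_0 : X_r \to Y_r$. Fix $r_0 > 1$ as supplied by Lemma \ref{ESP} and let $r \in \,]1, r_0[$. The key observation, already recorded in the excerpt, is that $\big(\mathscr{T}_0(g)\big)_n = L_n(g_n)$, so that the candidate inverse is the map $\mathscr{S}_0 : Y_r \to X_r$ defined on a sequence $(a_n)_{n\in\mathbb{N}^*} \in Y_r$ by $\mathscr{S}_0\big((a_n)_n\big) = g$ where $g(s,u) = \sum_{n=1}^\infty g_n(u) H_n(s)$ with $g_n := L_n^{-1}(a_n)$. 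First I would check that this $g$ genuinely lies in $X_r$: by estimate \eqref{BV2}, $\Vert g_n \Vert_{L^r(S_2)} \leq (C/n^2)\Vert a_n\Vert_{L^r(S_2)} \leq (C/n^3)\Vert (a_n)_n\Vert_{Y_r}$, so $\sup_n n^3 \Vert g_n\Vert_{L^r(S_2)} < \infty$; and from the identity $\mathcal{G}\cdot\frac{\partial g_n}{\partial u} = a_n - (n^2\pi^2 - 3\kappa:u\otimes u) g_n$ (i.e. \eqref{FNK} together with \eqref{DKU}) combined with \eqref{BV2} one gets $\Vert \mathcal{G}\cdot\frac{\partial g_n}{\partial u}\Vert_{L^r(S_2)} \leq \Vert a_n\Vert_{L^r(S_2)} + C\Vert a_n\Vert_{L^r(S_2)} \leq (C'/n)\Vert (a_n)_n\Vert_{Y_r}$, hence $\sup_n n\Vert \mathcal{G}\cdot\frac{\partial g_n}{\partial u}\Vert_{L^r(S_2)} < \infty$. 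Thus $g \in X_r$ and moreover $\Vert \mathscr{S}_0\big((a_n)_n\big)\Vert_{X_r} \leq C \Vert (a_n)_n\Vert_{Y_r}$, so $\mathscr{S}_0$ is bounded.

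Next I would verify that $\mathscr{T}_0$ and $\mathscr{S}_0$ are mutually inverse. For $\mathscr{T}_0 \circ \mathscr{S}_0 = \mathrm{Id}_{Y_r}$: given $(a_n)_n \in Y_r$, set $g = \mathscr{S}_0((a_n)_n)$; since by Lemma \ref{COM} the Fourier coefficients of $g$ are exactly the $g_n = L_n^{-1}(a_n)$ used in its construction, we get $\big(\mathscr{T}_0(g)\big)_n = L_n(g_n) = L_n\big(L_n^{-1}(a_n)\big) = a_n$ for every $n$, whence $\mathscr{T}_0(g) = (a_n)_n$. For $\mathscr{S}_0 \circ \mathscr{T}_0 = \mathrm{Id}_{X_r}$: given $g \in X_r$, put $a_n := L_n(g_n) = \big(\mathscr{T}_0(g)\big)_n$; then $L_n^{-1}(a_n) = g_n$, so $\mathscr{S}_0\big(\mathscr{T}_0(g)\big)$ has Fourier coefficients $g_n$ and, by the absolutely convergent expansion \eqref{FOU} of Lemma \ref{COM}, equals $g$ in $W^{1,\infty}\big(0,1,L^r(S_2)\big)$, hence as an element of $X_r$. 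The only point that needs a word of care is that $\mathscr{T}_0(g)$ does lie in $Y_r$, i.e. that $\mathscr{T}_0$ is well defined and bounded as a map $X_r \to Y_r$; this is immediate from the definition of the $X_r$ norm, since $a_n = n^2\pi^2 g_n + \frac{\partial}{\partial u}\cdot(\mathcal{G}g_n) = n^2\pi^2 g_n + \mathcal{G}\cdot\frac{\partial g_n}{\partial u} - 3(\kappa:u\otimes u)g_n$ gives $n\Vert a_n\Vert_{L^r(S_2)} \leq \pi^2 n^3\Vert g_n\Vert_{L^r(S_2)} + n\Vert \mathcal{G}\cdot\frac{\partial g_n}{\partial u}\Vert_{L^r(S_2)} + 3\Vert\kappa\Vert\, n\Vert g_n\Vert_{L^r(S_2)} \leq C\Vert g\Vert_{X_r}$ uniformly in $n$.

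Having exhibited a bounded two-sided inverse, the Banach isomorphism theorem (or simply the definition of isomorphism in the category of Banach spaces) yields that $\mathscr{T}_0 : X_r \to Y_r$ is a Banach isomorphism, which is the assertion of Corollary \ref{ZER} with the value $r_0$ inherited from Lemma \ref{ESP}. I do not anticipate any genuine obstacle here: all the analytic content — the uniform bounds, the injectivity, the surjectivity of each $L_n$ — has already been extracted in Lemma \ref{ESP}, and the passage from the scalar family $(L_n)_n$ to the operator $\mathscr{T}_0$ is purely bookkeeping, the one subtlety being to invoke \eqref{FOU} to identify an element of $X_r$ with the sum of its Fourier series rather than merely with the sequence of its coefficients. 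If anything is mildly delicate it is keeping the roles of the three norms ($X_r$, $Y_r$, and the plain $L^r(S_2)$ estimates \eqref{BV1}–\eqref{BV2}) straight, and making sure the same $r_0$ works for all $n$ simultaneously, which is exactly what the uniform constant $C$ in Lemma \ref{ESP} guarantees.
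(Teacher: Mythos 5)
Your argument is correct and is exactly the routine verification the paper leaves implicit when it states that the corollary follows "easily" from the identity $\big(\mathscr{T}_0(g)\big)_n = L_n(g_n)$ together with Lemma \ref{ESP}: the inverse is assembled frequency by frequency from the $L_n^{-1}$, with \eqref{BV2} giving the $n^{-3}$ decay, the identity \eqref{FNK}--\eqref{DKU} giving the $n^{-1}$ bound on $\mathcal{G}\cdot\partial g_n/\partial u$, and \eqref{FOU} identifying the resulting series with an element of $X_r$. No discrepancy with the paper's (tacit) proof.
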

%%%%%%%%%%%%%%%%%%%%%%%%%%%%%%%%%%%%%%%%%%%%%
%%%%%%%%%%%%%%%%%%%%%%%%%%%%%%%%%%%%%%%%%%%%
\section{Proof of the existence result for $\epsilon$ small.}\label{4}
%%%%%%%%%%%%%%%%%%%%%%%%%%%%%%%%%%%%%%%%%%%%%
%%%%%%%%%%%%%%%%%%%%%%%%%%%%%%%%%%%%%%%%%%%%%
%%%%%%j%%%%%%%%%%%%%%%%%%%%%%%%%%%%%%%%%%%%%
For $\vert \epsilon \vert$ small enough, existence of solutions 
for the problem $\eqref{EDPH}$-$\eqref{CLH}$ will be 
a consequence of corollary $\ref{ZER}$ and the implicit function 
theorem for an 
%appropriate nonlinear 
appropriate operator 
$\mathscr{T}: \mathbb{R} \times X_r \rightarrow Y_r$.
In order to handle the nonlinearity of such an operator, 
we prove a preliminary lemma.
Notice first that for any $n\in\mathbb{N}^*$, due to remark 
$\ref{PROD}$:
\begin{align}
b_n = \int_0^1 \dfrac{\partial}{\partial s}\big[\phi \kappa:
\lambda(\psi)\big](s) H_n(s)ds\label{QTG}
\end{align}
is well defined and belongs to 
$L^r(S_2)$ for any $ \phi, \psi \in X_r$.
\begin{lemma}\label{OHM}
For any $r \geq 1$
let $B : X_r \times X_r \rightarrow Y_r$ 
be given by $B(\phi,\psi)=(b_n)_{n\in\mathbb{N}^*}$
where $b_n$ is given by $\eqref{QTG}$.

The function $B$ is well defined, bilinear 
and continuous.  Moreover, for any 
$\phi, \psi\in X_r$ and $r \geq 1$ we have: 
\begin{align}
\Vert b_n \Vert_{L^r(S_2)} \leq \dfrac{C}{n^2} \Vert \phi \Vert_{X_r}
\Vert \psi \Vert_{X_r}\label{ABF}
\end{align}
where $C>0$ is a constant.
\end{lemma}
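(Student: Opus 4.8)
Bilinearity of $(\phi,\psi)\mapsto(b_n)_n$ is immediate from \eqref{QTG} (each $b_n$ is linear in $\phi$, and linear in $\psi$ because $\lambda$ is linear), that $b_n\in L^r(S_2)$ has already been recorded, and the remaining content — $B$ maps into $Y_r$ and is continuous — follows at once from \eqref{ABF}, since then $\Vert B(\phi,\psi)\Vert_{Y_r}=\sup_n n\Vert b_n\Vert_{L^r(S_2)}\le C\Vert\phi\Vert_{X_r}\Vert\psi\Vert_{X_r}$. So the plan is to prove \eqref{ABF}, frequency by frequency. I would start by recording the properties of the scalar function $q:=\kappa:\lambda(\psi)$ of the single variable $s\in[0,1]$. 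Since $X_r$ is continuously embedded in $W^{1,\infty}\big(0,1,L^r(S_2)\big)$ (Lemma \ref{COM}) and every element of $X_r$ vanishes at $s=0$ and $s=1$ (Remark \ref{reu}), one checks that $q\in W^{2,\infty}(0,1)$, that $q(0)=q'(0)=q'(1)=0$ — the vanishing of $q'$ at the endpoints because $q'(s)=\int_{S_2}(\kappa:v\otimes v)\,\psi(s,v)\,d\mu(v)$ and $\psi(0,\cdot)=\psi(1,\cdot)=0$ — and that $\Vert q\Vert_{L^\infty}+\Vert q'\Vert_{L^\infty}+\Vert q''\Vert_{L^\infty}\le C\Vert\psi\Vert_{X_r}$. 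More importantly, writing $q'=\sum_k\gamma_k H_k$ one has $\gamma_k=\int_{S_2}(\kappa:v\otimes v)\,\psi_k(v)\,d\mu(v)$, whence $|\gamma_k|\le C\Vert\psi_k\Vert_{L^r(S_2)}\le C\,k^{-3}\Vert\psi\Vert_{X_r}$: the cubic decay built into the norm of $X_r$ is inherited by these one-dimensional coefficients.

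Next I would expand $\phi=\sum_m\phi_m H_m$, which by Lemma \ref{COM} converges absolutely in $W^{1,\infty}\big(0,1,L^r(S_2)\big)$ with $\Vert\phi_m\Vert_{L^r(S_2)}\le m^{-3}\Vert\phi\Vert_{X_r}$, apply the Leibniz rule $\partial_s(\phi q)=(\partial_s\phi)\,q+\phi\,q'$ inside \eqref{QTG}, and use the product-to-sum identities $2\cos(m\pi s)\sin(n\pi s)=\sin\big((n+m)\pi s\big)+\sin\big((n-m)\pi s\big)$ and $2\sin(m\pi s)\sin(n\pi s)=\cos\big((m-n)\pi s\big)-\cos\big((m+n)\pi s\big)$. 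This turns each of the two pieces of $b_n$ into a series over $m$ whose summands are products of $\phi_m$ with a one-dimensional sine or cosine Fourier coefficient of $q$ or of $q'$, evaluated at the shifted frequencies $n\pm m$. Integrating by parts and using the boundary conditions above gives, for $j\ge1$, $\int_0^1 q(s)\sin(j\pi s)\,ds=-\dfrac{(-1)^j q(1)}{j\pi}+\rho_j$ with $|\rho_j|\le C\Vert\psi\Vert_{X_r}\,j^{-2}$; and — here using the cubic decay of $(\gamma_k)_k$ rather than the mere smoothness of $q'$ — $\Big|\int_0^1 q'(s)\cos(j\pi s)\,ds\Big|\le C\Vert\psi\Vert_{X_r}\,j^{-2}$ for $j\neq0$, the diagonal contribution $m=n$ being harmless because there $\Vert\phi_n\Vert_{L^r(S_2)}\le n^{-3}\Vert\phi\Vert_{X_r}$ multiplies a bounded coefficient.

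The gain of powers of $n$ then rests on two ingredients: the cubic decays $\Vert\phi_m\Vert_{L^r(S_2)}=O(m^{-3})$, $|\gamma_k|=O(k^{-3})$, and the partial-fraction cancellation $\dfrac1{n+m}+\dfrac1{n-m}=\dfrac{2n}{n^2-m^2}$, which recombines the two ``boundary'' terms $-\dfrac{(-1)^{n\pm m}q(1)}{(n\pm m)\pi}$ produced by the two shifted frequencies into a single quantity of size $\dfrac{m\,n\,|q(1)|}{|n^2-m^2|}$ (and $|q(1)|\le C\Vert\psi\Vert_{X_r}$). After inserting $\Vert\phi_m\Vert_{L^r(S_2)}\le m^{-3}\Vert\phi\Vert_{X_r}$ and splitting the summation range into $m\le n/2$, $n/2<m<2n$ and $m\ge2n$, everything reduces to elementary series estimates such as $\sum_{m\neq n}\dfrac1{m^2\,|m-n|\,(m+n)}\le C\,n^{-2}$, $\sum_{m\neq n}\dfrac1{m^2\,(m-n)^2}\le C\,n^{-2}$ and $\sum_{m}\dfrac1{m^3\,(m+n)^2}\le C\,n^{-2}$, which assemble into a bound of the form \eqref{ABF}.

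I expect this last step to be the main obstacle: the bookkeeping of the double series and the extraction of the sharp power of $n$. One feature that makes it look worse than it is: the shifted indices $n\pm m$ — and likewise the denominators $k^2-(m\pm n)^2$ that surface when one estimates the $\gamma_k$-type sums — seem to resonate, but the triple products $\int_0^1 H_m H_n H_k\,ds$ vanish by parity precisely when those indices vanish, so there is no genuine singularity; the whole difficulty is a convergence estimate for discrete convolutions of the $O(\cdot^{-3})$ sequences against the explicit kernels obtained above, uniform in $r\ge1$.
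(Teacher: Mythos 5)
Your setup is sound and close in spirit to the paper's: the paper also integrates by parts once, expands $\kappa:\lambda(\psi)$ and $\phi$ in one\,-dimensional Fourier modes with the cubic/quartic coefficient decay inherited from $\Vert\cdot\Vert_{X_r}$ (your $\gamma_k$ bound is exactly \eqref{CLA}), and estimates a discrete convolution by splitting the summation range. The gap is in your final assembly, and it is not a repairable bookkeeping slip. Chase your own constants through the boundary term: the $(\partial_s\phi)\,q$ piece contributes $-q(1)\sum_{m}m\,\phi_m(-1)^{n+m}\,\tfrac{2n}{n^2-m^2}$ plus remainders, and inserting $\Vert\phi_m\Vert_{L^r}\le m^{-3}\Vert\phi\Vert_{X_r}$ together with your estimate $\sum_{m\neq n}m^{-2}|m-n|^{-1}(m+n)^{-1}\le Cn^{-2}$ gives $C\,n\cdot n^{-2}\,|q(1)|\,\Vert\phi\Vert_{X_r}=C\,n^{-1}|q(1)|\,\Vert\phi\Vert_{X_r}$ --- one power of $n$ short of \eqref{ABF}. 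The loss is genuine whenever $q(1)=\int_0^1\int_{S_2}\kappa:v\otimes v\,\psi\,d\mu\,ds\neq0$: take $\phi=g(u)\sin(\pi s)$ and $\psi=\sin(\pi s)h(u)$ normalized so that $\kappa:\lambda(\psi)=1-\cos(\pi s)$; then $\partial_s\big(\phi\,\kappa:\lambda(\psi)\big)=\pi g\,[\cos(\pi s)-\cos(2\pi s)]$ and a direct computation gives $b_n=2\sqrt2\,n\,g/(n^2-1)$ for $n$ even and $b_n=-2\sqrt2\,n\,g/(n^2-4)$ for $n$ odd, i.e. $\Vert b_n\Vert_{L^r}\sim 2\sqrt2\,\Vert g\Vert_{L^r}/n$. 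So what your route actually proves is $\sup_n n\Vert b_n\Vert_{L^r}\le C\Vert\phi\Vert_{X_r}\Vert\psi\Vert_{X_r}$ --- precisely the boundedness of $B$ into $Y_r$ --- but not the extra factor $1/n$ claimed in \eqref{ABF}.

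For what it is worth, your more explicit computation is pointing at the delicate step in the paper's own argument. There, \eqref{ZZZ} is deduced from the convolution bound $\Vert h_n\Vert_{L^r}\le Cn^{-3}\Vert\phi\Vert_{X_r}\Vert\psi\Vert_{X_r}$ by identifying $\int_0^1(\cdot)\,e^{-in\pi s}\,ds$ with the coefficient $h_n$; but the family $\{e^{ip\pi s}\}_{p\in\mathbb{Z}}$ is orthogonal over an interval of length $2$, not over $[0,1]$, and the cross terms $\int_0^1e^{i(m-n)\pi s}\,ds=2i/\big((m-n)\pi\big)$ for $m-n$ odd reintroduce exactly the $O(n^{-2})$ contribution (hence $O(n^{-1})$ after the multiplication by $n$ in \eqref{XXX}) that your boundary value $q(1)$ makes visible. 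So you should not expect to close \eqref{ABF} by sharpening your series estimates: either the claim must be weakened to $\Vert b_n\Vert_{L^r}\le Cn^{-1}\Vert\phi\Vert_{X_r}\Vert\psi\Vert_{X_r}$ (which suffices for $B:X_r\times X_r\to Y_r$ to be well defined, bilinear and continuous, hence for the implicit-function argument of Section \ref{4}), or an additional cancellation must be identified where the $n^{-2}$ decay is genuinely used, e.g. in the convergence of $h^{(m)}$ in the proof of Lemma \ref{WEA}.
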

\begin{proof}
In order to prove inequality $\eqref{ABF}$, 
we integrate by part equation $\eqref{QTG}$. We get:
\begin{align}
b_n = -\sqrt{2}n\pi\int_0^1\phi\kappa:\lambda(\psi)
cos(n\pi s)ds\label{XXX}
\end{align}
Hence, we just have to prove that: 
\begin{align}
\Vert\int_0^1 \phi \kappa:\lambda(\psi)
e^{-i n\pi s}ds\Vert_{L^{r}(S_2)}
\leq 
\dfrac{C}{ n^3}
\Vert \phi \Vert_{X_r}
\Vert \psi \Vert_{X_r}\label{ZZZ}
\end{align}
for any $n\in\mathbb{N}^*$, with $C>0$
independent of $n, \phi, \psi$.

Observe that:
\begin{align}
\kappa:\lambda(\psi)(s) =& \int_0^s\int_{S_2}
\Big[\kappa:v \otimes v 
\sum_{q=1}^{+\infty}\psi_q(v)H_q(s')\Big]dvds'\nonumber\\
=&\sqrt{2}\sum_{q=1}^{+\infty}\Big\{\dfrac{1}{q\pi}\big[1-cos(q \pi s)\big]
\int_{S_2}\psi_q(v)\kappa:v \otimes vdv\Big\}\label{NHN}
\end{align}
We have by definition of $\Vert.\Vert_{X_r}$
and Holder inequality:
\begin{align}
\big\vert
\int_{S_2}\psi_q(s)(v)\kappa:v \otimes v dv
\big\vert
\leq
\dfrac{C}{q^{3}}\Vert \psi \Vert_{X_r}\label{CLA}
\end{align}
Hence: 
\begin{align}
\sum_{q=1}^{+\infty}\Big\{\dfrac{1}{q}
\big\vert
\int_{S_2}\psi_q(v)\kappa:v \otimes vdv\big\vert\Big\}
\leq
C\Vert \psi \Vert_{X_r}\label{NHM}
\end{align}
It follows from $\eqref{NHN}$, $\eqref{CLA}$ and $\eqref{NHM}$ that:
\begin{align}
\kappa:\lambda(\psi)(s) =
\sum_{p\in\mathbb{Z}}^{}
\lambda_p e^{i p \pi s}
\end{align}
with:
\begin{align} 
&\bullet \textrm{ For } p=0 , \vert \lambda_0 \vert
\leq C \Vert \psi \Vert_{X_r}\label{B1}\\
&\bullet \textrm{ For }p\in\mathbb{Z}^*, \vert \lambda_p \vert
\leq \dfrac{C}{p^4} \Vert \psi \Vert_{X_r}\label{B2}
\end{align}
As a consequence, $\sum_{p\in\mathbb{Z}}^{}
\lambda_p e^{i p\pi s}$ is absolutely convergent in
$L^{\infty}(0, 1)$.

On the other hand, we can write:
\begin{align}
\phi(s) =& 
\sum_{q=1}^{\infty}
\phi_q sin(q\pi s)\nonumber\\
=&\sum_{q\in\mathbb{Z}}^{}
\tilde{\phi_q} e^{i q\pi s}\label{RWW}
\end{align}
with $\tilde{\phi_0} = 0$, $\tilde{\phi_q} = -(i/2) \phi_q$ for $q>0$ and 
$\tilde{\phi_{q}} = (i/2) \phi_{-q}$ for $q<0$. Hence, for any 
$q\in\mathbb{Z}^*$:
\begin{align}
\Vert \tilde{\phi_{q}} \Vert_{L^{r}(S_2)}
\leq&
\dfrac{1}{2}
\Vert \phi_{q} \Vert_{L^{r}(S_2)}\nonumber\\
\leq&\dfrac{1}{2 \vert q \vert^3}
\Vert \phi \Vert_{X_r}\label{A1}
\end{align}
It follows that
$\sum_{q\in\mathbb{Z}}^{}
\tilde{\phi_q} e^{i q\pi s}$ 
is absolutely convergent in 
$L^{\infty}(0, 1, {L^{r}(S_2)})$. Invoking 
a classical result of the product of absolutely convergent series 
in Banach spaces, we find that:
\begin{align}
\phi\kappa:\lambda(\psi)=
\sum_{n\in\mathbb{Z}}^{}
h_n e^{i n\pi s}\nonumber
\end{align}
with absolute convergence in $L^{\infty}(0,1,L^{r}(S_2))$.
Moreover, since, for any $n\in\mathbb{Z}$ we have:
\begin{align}
h_n = 
\sum_{q\in\mathbb{Z}}^{}
\lambda_{n-q}\tilde{\phi_q}
\end{align}
we can write, restricting to $n\in\mathbb{N}^*$ and 
making use of inequalities 
$\eqref{B1}$, $\eqref{B2}$, $\eqref{A1}$: 
\begin{align}
\Vert h_n\Vert_{L^{r}(S_2)}
\leq&
\vert \lambda_0 \vert
\Vert \tilde{\phi_n}\Vert_{L^{r}(S_2)}
+
\sum_{\vert q\vert \geq (n/2)}^{}
\vert \lambda_{n-q} \vert
\Vert \tilde{\phi_q}\Vert_{L^{r}(S_2)}
+
\sum_{0<\vert q \vert < (n/2)}^{}
\vert \lambda_{n-q} \vert
\Vert \tilde{\phi_q}\Vert_{L^{r}(S_2)}\nonumber\\
\leq&
\Vert \psi \Vert_{X_r}\Vert \phi \Vert_{X_r}
\Big[
\dfrac{C}{n^{3}}
+
C\big(
\sum_{k\in\mathbb{Z}^{*}}
\dfrac{1}{k^{4}}\big)
\big(\dfrac{2}{n}
\big)^{3}
+
C\big(
\sum_{q\in\mathbb{Z}^{*}}
\dfrac{1}{q^{3}}\big)
\big(\dfrac{2}{n}
\big)^{4}
\Big]\nonumber\\
\leq&
\dfrac{C}{n^{3}}
\Vert \phi \Vert_{X_r}\Vert \psi \Vert_{X_r}\nonumber
\end{align}
This implies $\eqref{ZZZ}$. Due to equality $\eqref{XXX}$, we finally 
get $\eqref{ABF}$.
\end{proof}
Let $r \geq 1$. We introduce the operator: 
$$
\mathscr{T}: \mathbb{R} \times X_r \rightarrow Y_r
$$
defined for any $\epsilon \in \mathbb{R}$ and 
$g \in X_r$ by $\mathscr{T}(\epsilon, g) = 
(d_n)_{n\in\mathbb{N}^*}$ with: 
\begin{align}
d_n = 
n^2\pi^2g_n
+
\dfrac \partial {\partial u} \cdot \big(\mathcal{G}g_n\big)
-
\epsilon \kappa:u \otimes u g_n
+&
\epsilon \big(B(g, g)
\big)_n\nonumber\\
+&
\dfrac{\epsilon}{4\pi}
\int_{S_2}\kappa:v \otimes v g_n(v) dv
-
\dfrac{3+\epsilon}{4\pi}\kappa:u \otimes u 1_n\label{YOP}
\end{align}
In this writing,  $g_n$ is given by 
$\eqref{DYP}$. Coefficient $1_n$ is such that
$1 = \sum_{n\in\mathbb{N}^*}^{} 1_n H_n(s) $, with convergence 
in $L^2(0, 1)$, that is:
\begin{align}
1_n=&\sqrt{2}\int_{S_2} sin(n\pi s) ds\nonumber\\
=&\dfrac{\sqrt{2}}{n\pi}[1-(-1)^n]\label{UGH}
\end{align}
We can formulate problem ($\ref{EDPH}, \ref{CLH}$) 
in term of operator 
$\mathscr{T}$:
\begin{lemma}\label{WEA}
Let $(\epsilon, f) \in\mathbb{R}\times  X_r$ with $r \geq 1$. 
Function $f$ is a weak solution of 
$\eqref{EDPH}-\eqref{CLH}$ if and only if 
$\mathscr{T}(\epsilon, f) = 0$.
\end{lemma}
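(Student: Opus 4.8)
The plan is to show that the two statements are literally the same statement written in two different ways: testing the weak formulation \eqref{WEAK} against the dense family of functions $\phi(s,u) = H_n(s)\theta(u)$, $\theta \in H^2(S_2)$, $n \in \mathbb{N}^*$, is equivalent to requiring that each component $d_n$ of $\mathscr{T}(\epsilon,f)$ vanish in $L^r(S_2)$. First I would record that, by Lemma \ref{COM}, any $f \in X_r$ admits the expansion $f(s,u)=\sum_n f_n(u)H_n(s)$ converging in $W^{1,\infty}(0,1,L^r(S_2))$, so all the $s$-integrals below may be computed termwise; in particular $\int_0^1 f(s,\cdot)H_n(s)\,ds = f_n$ and $\int_0^1 \frac{\partial f}{\partial s}(s,\cdot)H_n'(s)\,ds = n^2\pi^2 f_n$ after one integration by parts (the boundary terms vanish by Remark \ref{reu}).

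Next I would take $\phi(s,u)=H_n(s)\theta(u)$ in \eqref{WEAK} and identify each of the five terms on the left-hand side with the corresponding term in \eqref{YOP}. The term $\int_Q \frac{\partial f}{\partial s}\frac{\partial \phi}{\partial s}$ gives $\int_{S_2} n^2\pi^2 f_n\,\theta\,d\mu$; the term $-\int_Q f\,\mathcal{G}\cdot\frac{\partial\phi}{\partial u}$ gives $-\int_{S_2} f_n\,\mathcal{G}\cdot\frac{\partial\theta}{\partial u}\,d\mu = \int_{S_2}\big(\frac{\partial}{\partial u}\cdot(\mathcal{G}f_n)\big)\theta\,d\mu$ by the Stokes formula \eqref{STOKES1} (which applies since $\mathcal{G}\cdot\frac{\partial f_n}{\partial u}\in L^r(S_2)$ for $f\in X_r$, and one writes $\frac{\partial}{\partial u}\cdot(\mathcal{G}f_n)=f_n\frac{\partial}{\partial u}\cdot\mathcal{G}+\mathcal{G}\cdot\frac{\partial f_n}{\partial u}$ using \eqref{DKU}); the term $-\epsilon\int_Q f\,\kappa:u\otimes u\,\phi$ gives $-\epsilon\int_{S_2}(\kappa:u\otimes u)f_n\,\theta\,d\mu$; the term $\epsilon\int_Q \frac{\partial}{\partial s}[f\kappa:\lambda(f)]\phi$ gives $\epsilon\int_{S_2}\big(B(f,f)\big)_n\theta\,d\mu$ directly from the definition \eqref{QTG} of $b_n$; and the last term $\frac{\epsilon}{4\pi}\int_Q(\int_{S_2}\kappa:v\otimes v\,f\,d\mu(v))\phi$ gives $\frac{\epsilon}{4\pi}\int_{S_2}(\int_{S_2}\kappa:v\otimes v\,f_n(v)\,d\mu(v))\theta\,d\mu$. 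On the right-hand side, $\frac{3+\epsilon}{4\pi}\int_Q\kappa:u\otimes u\,\phi\,dQ = \frac{3+\epsilon}{4\pi}\big(\int_0^1 H_n(s)\,ds\big)\int_{S_2}\kappa:u\otimes u\,\theta\,d\mu = \frac{3+\epsilon}{4\pi}\,1_n\int_{S_2}\kappa:u\otimes u\,\theta\,d\mu$, with $1_n$ as in \eqref{UGH}. Collecting, \eqref{WEAK} for this $\phi$ reads $\int_{S_2} d_n\,\theta\,d\mu = 0$.

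Finally I would argue both directions. If $\mathscr{T}(\epsilon,f)=0$ then every $d_n=0$ in $L^r(S_2)$, hence the identity $\int_{S_2}d_n\theta\,d\mu=0$ holds for all $\theta$, and since a general test function $\phi\in H_0^1(0,1,H^2(S_2))$ expands as $\phi=\sum_n H_n(s)\phi_n(u)$ with convergence strong enough to pass to the limit in each (continuous, by Lemmas \ref{COM} and \ref{OHM}) term of \eqref{WEAK}, summing over $n$ yields \eqref{WEAK}; thus $f$ is a weak solution. Conversely, if $f$ is a weak solution, specializing $\phi=H_n(s)\theta(u)$ gives $\int_{S_2}d_n\theta\,d\mu=0$ for all $\theta\in H^2(S_2)$, and since $H^2(S_2)$ is dense in $L^{r'}(S_2)$ and $d_n\in L^r(S_2)$, this forces $d_n=0$, i.e. $\mathscr{T}(\epsilon,f)=0$. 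The only mildly delicate point is the justification of termwise manipulation and of the density/duality argument that turns "$\int d_n\theta=0$ for all smooth $\theta$" into "$d_n=0$ in $L^r$"; both are routine given the $X_r$-estimates already established, so there is no real obstacle here — the lemma is essentially a bookkeeping identity.
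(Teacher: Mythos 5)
Your proof is correct and follows essentially the same route as the paper: test \eqref{WEAK} with $\phi(s,u)=H_n(s)\theta(u)$, identify the resulting identity with $\int_{S_2} d_n\,\theta\,d\mu=0$ term by term, and recover the general statement by expanding in the basis $(H_n)$ (the paper truncates $f$ and the nonlinear term via $f^{(m)}$, $h^{(m)}$ rather than truncating $\phi$, but the limiting argument rests on the same convergences from Lemmas \ref{COM} and \ref{OHM}). One small imprecision: for $r=1$ the space $H^2(S_2)$ is not dense in $L^{r'}(S_2)=L^{\infty}(S_2)$, but the conclusion $d_n=0$ still holds because an $L^1(S_2)$ function orthogonal to every element of $H^2(S_2)\supset C^{\infty}(S_2)$ vanishes almost everywhere.
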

\begin{proof}
Let $f\in X_r$
be a weak solution 
of $\eqref{EDPH}-\eqref{CLH}$.
Taking in 
$\eqref{WEAK}$ 
$\phi(s, u) = \psi(u) sin(n \pi s)$
with arbitrary $\psi\in H^2(S_2)$ 
and $n\in\mathbb{N}^*$, we obtain after integration 
by parts in $s$ that 
\\
$\mathscr{T}(\epsilon, f) = 0$.

Conversely, let us consider 
$f\in X_r$ such that 
$\big(\mathscr{T}(\epsilon, f)\big)_n = 0$ 
for any $n\in\mathbb{N}^*$.
This implies that for any 
$\phi \in H_0^1\big(0, 1, H^2(S_2)\big)$ and any 
$m\in\mathbb{N}^*$ we have:
\begin{align}
\int_{S_2}\int_{0}^1\Big[
-&\frac{\partial^2f^{(m)}}{\partial s^2}\phi-  
f^{(m)}\mathcal{G}\cdot\frac{\partial \phi}{\partial u}
-\epsilon f^{(m)}\kappa : u\otimes u \phi
+\epsilon h^{(m)} \phi
\nonumber\\
&+\dfrac{\epsilon}{4\pi}\int_{S_2}\kappa:v \otimes vf^{(m)}(v,s )d\mu(v)\phi
-\dfrac{3+\epsilon}{4\pi}\kappa:u \otimes u 1^{(m)}\phi\Big]
dsd\mu(u)= 0\label{BUD}
\end{align}
In the above equation, exponent 
$^{(m)}$ indicates a $L^2$ projection 
on $span(H_1, ..., H_m)$, i.e: $$f^{(m)}(s) = \sum_{n=1}^m 
\big[\int_0^1 f(\sigma)H_n(\sigma)d\sigma\big]H_n(s)$$,  
$$h^{(m)}(s) = \sum_{n=1}^m \Big[\int_0^1 
\dfrac{\partial}{\partial s}\big(f\kappa: \lambda(f)\big)(\sigma)H_n(\sigma)d\sigma\Big] 
H_n(s)$$, $$1^{(m)}(s) = \sum_{n=1}^m 
%\big[\int_0^1H_n(\sigma)d\sigma\big]
1_nH_n(s) $$
%\begin{align}
%f^{(m)}(s) =& \sum_{n=1}^m f_n H_n(s)\nonumber\\
%h^{(m)}(s) =& \sum_{n=1}^m h_n H_n(s)\nonumber\\
%\end{align}*
We integrate by parts with respect to the $s$ variable the first term of 
$\ref{BUD}$. Using convergences  
$f^{(m)} \rightarrow f$ in 
$W^{1, \infty}\big(0, 1, L^r(S_2)\big)$ (see $\ref{FOU}$)
and $h^{(m)} \rightarrow\dfrac{\partial}{\partial s}\big(f\kappa: \lambda(f)\big)$
in $L^{\infty}\big(0, 1, L^r(S_2)\big)$
(see $\ref{ABF}$), we obtain the result.
\end{proof}
We are in position to prove the existence 
and regularity part in theorem $\ref{TH}$:
\begin{proof}\label{KUI}
We consider $r \in \,  ]1, \, r_0[$ with $r_0 > 1$ given by lemma \ref{ESP}.
It is clear from lemma $\ref{OHM}$ that 
$\mathscr{T}$ is a $C^{\infty}$
function. Remark that, for any $g\in X_r$, 
$\mathscr{T}(0, g) = \mathscr{T}_0(g) -\alpha$ where 
$\alpha\in Y_r$ is given by $\alpha_n = 
\dfrac{3}{4\pi} \kappa: u \otimes u 1_n$. 
Now, Corollary $\ref{ZER}$ ensures that the hypothesis 
of the implicit function theorem are satisfied. It provides
the existence part as well as the regularity part 
%\eqref{2:regul} 
in theorem $\ref{TH}$.
\end{proof}
\begin{remark}
In the case $\epsilon = 0$ we have $ f_n \in L^r(S_2) $ \ for \ $ r \in [1, r_0[$. 
But we also know, from remark \ref{3:rem1}, that there exists $N \in \mathbb{N}^*$
such that $ f_n \in L^2(S_2) $ for $ n \geq N$.
Hence for $ \alpha \in \left[0, \frac 5 2 \right[ $ \ we have: 
\begin{align}
\| \sum_{n= 1}^\infty n^\alpha f_n(u) H_n(s) \|_{L^2(0, 1 ; L^r(S_2))} & \leq
\sum_{n= 1}^{N-1} n^\alpha \| f_n \|_{L^r(S_2)} + C
\| \sum_{n= N}^\infty n^\alpha f_n(u) H_n(s) \|_{L^2(0, 1 ; L^2(S_2))}
\nonumber
\\
& \leq C(N, \alpha) + C \left( \sum_{n= N}^{\infty} n^{2 \alpha} \| f_n \|_{L^2(S_2)}^2
\right)^{1/2} < + \infty
\nonumber
\end{align}
since $ \| f_n \|_{L^2(S_2)} \leq \frac c {n^3} $ \ for \ $ n \geq N$.
It follows that $ f \in H^{5/2 - \delta}(0, 1 ; L^r(S_2))$ for any $\delta > 0$ 
arbitrary small.
%\\

In contrast, we are unable to obtain such smoothness for $\epsilon \neq 0$. It comes from
the non linear term $ \epsilon \frac \partial {\partial s} (F k : \lambda(F))$
which couples the ''bad`` low frequencies with high frequencies.
\end{remark}
The main issue in the following is the non negativity of $F$, the other properties
could be obtained by rather simple means.
For instance the uniqueness can be proved by Holmgren's principle, but
we now argue differently.
%; property
%$ \int_{S_2} F d \mu = 1 $ follows by integrating equation \eqref{KKU} over %$S_2$.
%  
%%%%%%%%%%%%%%%%%%%%%%%%%%%%%%%%%%%%%%%%%%%%%
%%%%%%%%%%%%%%%%%%%%%%%%%%%%%%%%%%%%%%%%%%%%
\section{Some results on the evolution problem.}\label{5}
%%%%%%%%%%%%%%%%%%%%%%%%%%%%%%%%%%%%%%%%%%%%%
%%%%%%%%%%%%%%%%%%%%%%%%%%%%%%%%%%%%%%%%%%%%%
%%%%%%j%%%%%%%%%%%%%%%%%%%%%%%%%%%%%%%%%%%%%
We prove in the sequel (sections $\ref{5}$ and $\ref{6}$) that the solution $F = f
+(4\pi)^{-1}$ 
obtained in theorem $\ref{TH}$ is the $L^1(Q)$ limit as $t\rightarrow +\infty$
of a family $\Big(\big(f^e+(4\pi)^{-1}\big)(t)\Big)_{t>0}$ of probability densities
which is solution of the corresponding evolution problem.
In the rest of this paper, we mostly restrict to 
%$L^1(S_2) $ estimates, 
exponent $r=1$,
in order to get uniqueness in the $L^1(S_2)$ frame. 
To begin 
with, consider the following evolution problem associated with equations 
$\eqref{EDPH}-\eqref{CLH}$:\\

Find $f^e(t, s, u)$ solution of $\eqref{EDD}, \eqref{CLL}, \eqref{CII}$: 
\\
\begin{align}
\frac{\partial f^e}{\partial t}
-\frac{\partial^2f^e}{\partial s^2}+  
\frac{\partial}{\partial u}\cdot(\mathcal{G}f^e)
-&\epsilon f^e\kappa : u\otimes u
+\epsilon \frac{\partial}{\partial s}\Big( f^e\kappa:\lambda(f)\Big)\nonumber\\
+&\dfrac{\epsilon}{4\pi}\int_{S_2}\kappa:v \otimes vf^e(s,v )d\mu(v)
=\dfrac{3+\epsilon}{4\pi}\kappa:u \otimes u  \textrm{ on } Q_T\label{EDD}\\
f^e(s=0) = f^e(s=1) = 0\phantom{g}\label{CLL}\\
f^e(t=0) = f_0^{e}\phantom{) = 0gllllfgg}\label{CII}
\end{align}
with $Q_T = [0, T] \times Q$, $T>0$. Function $f_0^{e}: Q \rightarrow 
\mathbb{R}$ is the initial data.
\\

Existence and uniqueness results for problem 
$(\ref{EDD}, \ref{CLL}, \ref{CII})$ have been obtained in 
\cite{CHP1}:

\begin{theorem}\label{IAL}
 Assume that $f_0^{e} \in L^2(Q)$ and  
$\dfrac{\partial f_0^e}{\partial u} 
\in  \big(L^2(Q)\big)^3$. Then, there 
exists a unique variational solution  
$f^{e} \in L^2\big(0, T, H_0^1(Q)\big)$ 
with 
$\dfrac{\partial f^e}{\partial t} 
\in L^2\big(0, T, H^{-1}(Q)\big)$ 
in the following sense:
\begin{align}
&- \int_{Q_T}f^e\dfrac{\partial \phi}{\partial t} dQ_T
- \int_{Q}f_0^e\phi(t=0) dQ
+
\int_{Q_T}
\Big[
\dfrac{\partial f^e}{\partial t} 
\dfrac{\partial \phi}{\partial s}
+
%\dfrac{\partial }{\partial u}\cdot
%\big(\mathcal{G}f^e\big)\phi
%+
\dfrac{\partial }{\partial u}\cdot
\big(\mathcal{G}f^e\big)\phi\nonumber\\ 
&-\epsilon \kappa:u \otimes u f^e\phi
+\dfrac{\epsilon }{4\pi}
\int_{S_2}\kappa:v \otimes v f^edv \phi
-\epsilon f^e \kappa:\lambda(f^e)
\dfrac{\partial \phi}{\partial s}
\Big]dQ_T
=\dfrac{3+\epsilon }{4\pi}\int_{Q_T} 
\kappa:u \otimes u \phi \, dQ_T\label{BGX}
\end{align}
for any $\phi \in H^1\big(0, T:H_0^1(Q)\big)$
with $ \phi(t=T) = 0$. Moreover, 
if: $$f_0^e +\dfrac{1}{4\pi} \geq 0  \textrm{ a.e } 
(s, u) \in Q  \textrm{ and } \int_{S_2}
\Big(f_0^e +\dfrac{1}{4\pi}\Big) d\mu = 1 
\textrm{ a.e }   s \in ]0, 1[$$ then: 
$$f^e +\dfrac{1}{4\pi} \geq 0 \textrm{ a.e } 
(t, u, s) \in Q_T \textrm{ and } \int_{S_2}
\Big(f^e +\dfrac{1}{4\pi}\Big) d\mu = 1 
\textrm{ a.e }  (t, s) \in ]0, T[ \times ]0, 1[$$
\end{theorem}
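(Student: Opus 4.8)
This statement is that of \cite{CHP1}, so the plan is to recall the strategy used there. Equation \eqref{EDD} is parabolic only in the $s$ variable, the dependence on $u\in S_2$ entering through the transport term $\partial_u\cdot(\mathcal{G}f^e)$, the zeroth order term $-\epsilon f^e\kappa:u\otimes u$, the nonlocal term $\tfrac{\epsilon}{4\pi}\int_{S_2}\kappa:v\otimes v f^e\,d\mu(v)$ and the nonlinearity $\epsilon\partial_s(f^e\kappa:\lambda(f^e))$ appearing in the weak form \eqref{BGX}. I would build a solution by a Faedo--Galerkin scheme, using the basis $(H_n)_{n\in\mathbb{N}^*}$ in $s$ and a Hilbert basis of $L^2(S_2)$ (spherical harmonics) in $u$; alternatively one can freeze $f^e\kappa:\lambda(f^e)$ and combine a Banach fixed point on a short time interval with the a priori bounds below. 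The basic a priori estimate comes from testing with $f^e$: for the transport term one uses Stokes' formula \eqref{STOKES1}--\eqref{STOKES2} together with \eqref{DKU}, so that $\int_{S_2}\mathcal{G}\cdot\partial_u f^e\,f^e\,d\mu = -\tfrac12\int_{S_2}(\partial_u\cdot\mathcal{G})(f^e)^2\,d\mu = \tfrac32\int_{S_2}(\kappa:u\otimes u)(f^e)^2\,d\mu$, which is $O(\|f^e\|_{L^2}^2)$; the zeroth order and nonlocal terms are likewise $O(\|f^e\|_{L^2}^2)$; and $\epsilon\int_Q\partial_s(f^e\kappa:\lambda(f^e))\,f^e$, after integrating the $\partial_s$ by parts onto the test function, is controlled using $\|\kappa:\lambda(f^e)(s)\|_{L^\infty}\le C\|f^e\|_{L^2}$ and $\|\partial_s(\kappa:\lambda(f^e))(s)\|_{L^\infty}\le C\|f^e(s)\|_{L^2(S_2)}$. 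A Gronwall argument then gives a bound in $L^\infty(0,T;L^2(Q))\cap L^2(0,T;H^1)$ for $\partial_s f^e$; differentiating the equation in $u$ and using the hypothesis $\partial_u f_0^e\in(L^2(Q))^3$ yields the full $L^2(0,T;H_0^1(Q))$ bound, and then $\partial_t f^e\in L^2(0,T;H^{-1}(Q))$ follows from the equation. Compactness (Aubin--Lions) and passage to the limit in the approximation produce a variational solution.

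Uniqueness is obtained in the same framework: the difference $w=f_1^e-f_2^e$ of two solutions with the same data satisfies a linear equation whose source is the quadratic difference built from $\kappa:\lambda(f_1^e)-\kappa:\lambda(f_2^e)$; testing with $w$, using the same Stokes and boundedness estimates, and Gronwall force $w\equiv0$. For the conservation of mass, integrate \eqref{EDD} over $S_2$: the transport term disappears by \eqref{STOKES2}, the right-hand side vanishes since $\int_{S_2}\kappa:u\otimes u\,d\mu=\tfrac{4\pi}{3}\,\mathrm{tr}(\kappa)=0$ by \eqref{TR}, and the zeroth order term cancels the nonlocal one; one is left with a linear one-dimensional parabolic problem for $m(t,s):=\int_{S_2}(f^e+\tfrac1{4\pi})\,d\mu$ with $m=1$ at $s\in\{0,1\}$ and $m(0,\cdot)=1$, so a standard energy estimate gives $m\equiv1$.

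The non-negativity is the delicate point and is where the main obstacle lies: \eqref{EDD} carries no diffusion in the $u$ variable, so the classical parabolic maximum principle is not directly available, and the nonlocal and nonlinear terms do not obviously preserve the sign. The argument of \cite{CHP1} proceeds by a renormalized maximum-principle estimate: one writes the equation satisfied by $F^e=f^e+\tfrac1{4\pi}$, whose right-hand side vanishes after using \eqref{DKU} and \eqref{TR}, and tests it against a truncation of the negative part $(F^e)^-$. The elliptic term produces the nonnegative contribution $\int|\partial_s(F^e)^-|^2$; the transport term is treated by the DiPerna--Lions regularization already used in Lemma \ref{POK} together with Stokes' formula, giving a contribution $O(\|(F^e)^-\|_{L^2}^2)$; and the remaining zeroth order, nonlocal and nonlinear terms are absorbed, using the mass bound $\int_{S_2}F^e=1$ to estimate $\int_{S_2}|F^e|\,d\mu$ by $1+2\|(F^e)^-\|_{L^1(S_2)}$ and, where needed, Young's inequality against $\int|\partial_s(F^e)^-|^2$. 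One then closes a differential inequality for $\int_Q\big((F^e)^-\big)^2$ with zero initial value and deduces $(F^e)^-\equiv0$. Making the transport and nonlinear renormalizations rigorous, and controlling the coupling between the negative part and the nonlocal term, is the technical heart of the proof.
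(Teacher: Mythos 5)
The paper does not prove Theorem \ref{IAL}: it is imported from \cite{CHP1}, the only sentence devoted to it being ``Existence and uniqueness results for problem \eqref{EDD}, \eqref{CLL}, \eqref{CII} have been obtained in \cite{CHP1}''. There is therefore no internal proof to compare your attempt with, and you correctly recognize this. Your reconstruction --- Galerkin or frozen-coefficient fixed point for existence, the basic energy estimate in which the degenerate transport term is handled via Stokes' formula and \eqref{DKU} (giving the $\tfrac32\int_{S_2}(\kappa:u\otimes u)(f^e)^2\,d\mu$ contribution), Gronwall, uniqueness by testing the difference equation, mass conservation by integrating over $S_2$ and observing the cancellation of the zeroth-order and nonlocal terms, and a renormalized truncation of the negative part for non-negativity --- is a standard and plausible route for this class of hypoelliptic/degenerate Fokker--Planck equations, and it is consistent with everything the present paper actually extracts from the theorem, namely the uniform bounds \eqref{CRUX} and \eqref{CRAX} and the projected equations \eqref{EDPP}. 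Be aware, though, that your sketch asserts rather than carries out the two genuinely delicate steps (the rigorous DiPerna--Lions renormalization of the $u$-transport at the mere $L^2$ regularity available, and the control of the nonlocal and nonlinear couplings in the sign argument), and that whether your outline coincides with the actual argument of \cite{CHP1} cannot be verified from the material provided here.
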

From now on,
we assume that $f_0^{\epsilon}\in
H_0^1(Q)$ with
$f_0^e +\dfrac{1}{4\pi} \geq 0$
and 
$\int_{S_2}
\Big(f_0^e +\dfrac{1}{4\pi}\Big) d\mu = 1$.
%With the notations of 
From theorem 
$\ref{IAL}$, this implies the 
very useful estimate
(uniform 
in $t\in \mathbb{R}_+$): 
%estimate: 
\begin{align}
\int_{S_2} \vert f^e \vert d\mu \leq 2 
\textrm{ a.e }  (t, s) \in \mathbb{R}_+ \times ]0, 1[ \label{CRUX}
\end{align}
%This is the main estimate. It will allow us to pass to the limit 
%as $t\rightarrow +\infty$.

As a consequence,we deduce from $\eqref{CRUX}$ that:
\begin{align}
\Vert \lambda (f^e)
\Vert_{L^{\infty}(0, T:W^{1, \infty}(0, 1))}
\leq 2\label{CRAX}
\end{align}
with $C\geq 0$ independent of $T \geq 0$.
We also have:
\begin{align}
\sum_{n=1}^{\infty}n^2\Vert f_{0, n}^{e}\Vert^2_{L^1(S_2)}\leq
C \sum_{n=1}^{\infty}n^2\Vert f_{0, n}^{e}\Vert^2_{L^2(S_2)}
<\infty \label{CRIX}
\end{align}
where
\begin{align}
f_{0, n}^{e} = \int_0^1
f_{0}^{e}(s) H_n(s)ds\label{CRLX}
\end{align}
for any $n\in\mathbb{N}^*$.  For $n\in \mathbb{N}^*$, let us denote,
$f_n^e = \int_0^1 f^e(s)H_n(s) ds \in L^2\big(0, T, H^1(S_2)\big)$. 
In $\eqref{BGX}$, taking $\phi(t, s, u) = \psi(t, u) H_n(s)$ with 
$\psi\in H^1\big(0, T, H^1(S_2)\big)$ and $\psi(t=T)=0$
as a test function, we easily obtain, for any $n\in\mathbb{N}^*$: 
\begin{align}
\dfrac{\partial f_n^e}{\partial t}+n^2\pi^2f_n^e
+\dfrac{\partial }{\partial u}\cdot
\big(\mathcal{G}f_n^e\big) 
-&
\epsilon \kappa u \otimes u f_n^e
+\dfrac{\epsilon }{4\pi}\int_{S_2}\kappa:v \otimes v
f_n^e(v) dv\nonumber\\
-&\sqrt{2}\epsilon n\pi \int_0^1 \big(f^e\kappa:\lambda(f^e)\big)(s)
cos(n\pi s)ds =\dfrac{3+\epsilon }{4\pi}\kappa:u\otimes u 1_n\label{EDPP}
\end{align}
All the terms appearing in the above equality belongs to 
$L^2\big(0, T, L^2(S_2)\big)$.

For the initial data, we have:
\begin{align}
f_n^e(t = 0) = f_{0, n }^e\label{CLPP}
\end{align}
where $f_{0, n }^e$ is given by $\eqref{CRLX}$.
For future reference note that: 
\begin{align}
f^e(t, s, u)  = \sum_{n=1}^{\infty}f_{n}^e(t, u)H_n(s)\label{DDJ}
\end{align} 
with convergence in 
$L^2\Big(0, 1,L^2\big(0, T, H^1(S_2)\big)\Big)$.
Last:
\begin{align}
\dfrac{\partial f_n^e}{\partial s}(s) =
\sum_{n=1}^{\infty} n\pi\sqrt{2}f_n^e(s)cos(n\pi s)\label{VART}
\end{align}
with convergence in $L^2(Q_T)$.

It is well known that for the 
heat equation, 
estimates of two derivatives with respect to the space variables can be obtained
%integrating in time allows 
%the gain of two derivatives with respect to the data 
in suitable spaces. 
%(see for instance \cite{DANCHIN}). 
From that point of view, 
estimates with respect to the $s$ derivatives in theorem 
$\ref{IAL}$ do not seem to 
be optimal. The following simple estimate
will be enough for our purposes:
\begin{lemma}\label{SOR}
With the notations, and under the hypothesis 
of theorem $\ref{IAL}$, there exist $\epsilon_0>0$
such that: 
\begin{align}
\sum_{n=1}^{\infty} \int_0^T
n^{4}\Vert f_n^e(t) \Vert_{L^1(S_2)}^2 dt
\leq C(T) < \infty\label{ZAZA}
\end{align}
for any $T\geq 0$ and $\epsilon\in ]0, \epsilon_0[$
\end{lemma}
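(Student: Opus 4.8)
The plan is to derive the estimate \eqref{ZAZA} by an energy method applied to the Fourier-mode equation \eqref{EDPP}, working directly in the $L^1(S_2)$ framework via the regularisation device of Lemma \ref{POK}. First I would multiply \eqref{EDPP} by $|f_n^e|^{-1+1}\mathrm{sgn}(f_n^e)$ — more precisely, regularise $|\cdot|$ by $h_\delta(y)=\sqrt{y^2+\delta}$ as in the proof of Lemma \ref{POK}, multiply by $h_\delta'(f_n^e)$, integrate over $S_2$, and let $\delta\to 0$ — so as to obtain an $L^1(S_2)$ differential inequality for $\|f_n^e(t)\|_{L^1(S_2)}$. The time-derivative term yields $\frac{d}{dt}\|f_n^e\|_{L^1(S_2)}$; the term $n^2\pi^2 f_n^e$ gives $n^2\pi^2\|f_n^e\|_{L^1(S_2)}$; the transport term $\frac{\partial}{\partial u}\cdot(\mathcal{G}f_n^e)$ is handled exactly as in Lemma \ref{ESP}, using \eqref{DKU}, Stokes' formula \eqref{STOKES2} and Lemma \ref{POK}, and contributes a term bounded by $C\|f_n^e\|_{L^1(S_2)}$ with $C$ depending only on $\kappa$; the zeroth-order terms $\epsilon\,\kappa:u\otimes u\,f_n^e$ and $\frac{\epsilon}{4\pi}\int_{S_2}\kappa:v\otimes v\,f_n^e\,dv$ are likewise controlled by $C|\epsilon|\,\|f_n^e\|_{L^1(S_2)}$.

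The two remaining terms are the source term and the nonlinear term. The right-hand side $\frac{3+\epsilon}{4\pi}\kappa:u\otimes u\,1_n$ has $L^1(S_2)$ norm $\le C|1_n|\le C/n$ by \eqref{UGH}, so after multiplying the whole inequality by an integrating factor we keep control of a contribution of size $C/n$. The nonlinear term $\sqrt{2}\,\epsilon\,n\pi\int_0^1(f^e\kappa:\lambda(f^e))(s)\cos(n\pi s)\,ds$ is the one carrying the dangerous factor $n$; here I would use \eqref{CRUX}, i.e. $\|f^e(t,s,\cdot)\|_{L^1(S_2)}\le 2$, together with \eqref{CRAX}, $\|\kappa:\lambda(f^e)\|_{L^\infty}\le C$, to bound its $L^1(S_2)$ norm by $C|\epsilon|\,n\,\|f^e(t)\|_{L^1(Q_s\times S_2)}$. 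Choosing $|\epsilon|<\epsilon_0$ small I can absorb the resulting $|\epsilon|\,n^2$ contribution into the good dissipative term $n^2\pi^2\|f_n^e\|_{L^1(S_2)}$ coming from the principal part (after first dividing the differential inequality by $\|f_n^e\|$ in the regularised setting, or equivalently working with $\frac{d}{dt}\|f_n^e\|_{L^1}+c\,n^2\|f_n^e\|_{L^1}\le\dots$).

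Integrating this differential inequality in $t$, multiplying by $n^2$, and summing over $n$, I would then invoke \eqref{CRIX} to control $\sum_n n^2\|f_{0,n}^e\|_{L^1(S_2)}^2$ by a finite constant, and Young's inequality to turn the mixed products $n^2\|f_n^e\|\cdot(\text{source and nonlinear contributions})$ into a sum of $n^4\|f_n^e\|^2$ terms (with small coefficient, reabsorbed) plus summable remainders; for the nonlinear part one crucially uses that $\sum_n\int_0^T n^2\|f_n^e(t)\|_{L^1(S_2)}^2\,dt$ is already finite from the first-level energy estimate obtained by taking $r=1$ in the obvious way, or from \eqref{CRIX} propagated by Grönwall, so that the $n\cdot(\cdot)$ nonlinearity yields a summable-in-$n$, integrable-in-$t$ bound once $|\epsilon|$ is small. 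The main obstacle is precisely this bookkeeping on the nonlinear term: one must check that the extra power of $n$ it carries is compensated, using \eqref{CRUX}--\eqref{CRAX} to keep $f^e$ (not its derivative) under the integral, and that the coupling of frequencies it induces does not destroy summability — this is where the smallness of $\epsilon_0$ is genuinely used, and where the argument would fail for large $\epsilon$. A minor technical point, also worth spelling out, is justifying the chain-rule manipulation \eqref{QS} for the time-dependent mode $f_n^e\in L^2(0,T;H^1(S_2))$ with only $L^1$ control, which is why Lemma \ref{POK} was stated with the extra variable $t$.
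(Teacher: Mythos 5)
Your overall skeleton coincides with the paper's: multiply the mode equation \eqref{EDPP} by $\mathrm{sgn}(f_n^e)$ (justified through Lemma \ref{POK} with $r=1$ and the Stokes identity), obtain a differential inequality for $\Vert f_n^e\Vert_{L^1(S_2)}$, multiply by $n^2\Vert f_n^e\Vert_{L^1(S_2)}$, sum over $n$, and close with Young's inequality and \eqref{CRIX}. The transport and zeroth-order terms are handled correctly. The gap is in your treatment of the nonlinear term. You integrate by parts to write it as $\sqrt2\,\epsilon n\pi\int_0^1(f^e\kappa:\lambda(f^e))\cos(n\pi s)\,ds$ and bound its $L^1(S_2)$ norm by $C|\epsilon|\,n$ using only \eqref{CRUX}--\eqref{CRAX}. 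That bound carries no decay in $n$ and is not proportional to $\Vert f_n^e\Vert_{L^1(S_2)}$, so it cannot be ``absorbed into the dissipative term $n^2\pi^2\Vert f_n^e\Vert_{L^1(S_2)}$'' as you claim: after multiplying the inequality by $n^2\Vert f_n^e\Vert_{L^1(S_2)}$ and applying Young's inequality you are left with a remainder of order $\epsilon^2 n^2$ that does not involve $f_n^e$ at all, and $\sum_n n^2$ diverges. Your fallback --- that $\sum_n\int_0^T n^2\Vert f_n^e\Vert_{L^1(S_2)}^2\,dt$ is already finite --- does not repair this, since that quantity controls the Fourier modes of $f^e$, not those of the product $f^e\kappa:\lambda(f^e)$, and in any case the divergent remainder contains no factor of $f_n^e$ to which such information could be applied. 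No smallness of $\epsilon$ rescues the computation.

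The missing ingredient is the one the paper uses: do not trade the $s$-derivative for a factor of $n$. Set $g^e=\frac{\partial}{\partial s}\big[f^e\kappa:\lambda(f^e)\big]$. Since $f^e\in L^2\big(0,T,H_0^1(Q)\big)$ by Theorem \ref{IAL} and $\kappa:\lambda(f^e)$ is bounded in $W^{1,\infty}(0,1)$ uniformly in $t$ by \eqref{CRUX}--\eqref{CRAX}, one has $g^e\in L^2(Q_T)$. Bessel's inequality in the $s$-variable then gives $\sum_n\Vert g_n^e\Vert^2_{L^2(]0,T[\times S_2)}\le C\Vert g^e\Vert^2_{L^2(Q_T)}$, hence by H\"older $\sum_n\Vert g_n^e\Vert^2_{L^1(]0,T[\times S_2)}<\infty$; this is exactly \eqref{LOUF}. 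The coefficient $g_n^e$ is ($\epsilon^{-1}$ times) precisely your integrated-by-parts term, so the dangerous factor $n$ is already encoded in a square-summable family, and the remainder produced by Young's inequality becomes $\sum_n\Vert g_n^e\Vert^2_{L^1(S_2)}$, which is integrable in $t$ with finite sum over $n$. With this replacement your argument closes and yields \eqref{ZAZA}; without it, the series does not converge.
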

\begin{proof}
Let us denote $g^e =  \dfrac{\partial }{\partial s}\Big[
f^e\kappa:\lambda(f^e)\Big]$. Function $g^e$ is an element of 
$L^2(Q_T)$ due to $\eqref{CRAX}$. For $n\in\mathbb{N}^*$, we write 
as usual
$g_n^e = \int_0^1 g^e(s) H_n(s)ds$. Function $g_n^e$ belongs to 
$L^2(]0, T[ \times S_2)$, and by the  Holder and Bessel inequalities:
\begin{align}
\sum_{n=1}^{\infty} 
\Vert g_n^e(t) \Vert_{L^1(]0, T[\times S_2)}^2
\leq
C\sum_{n=1}^{\infty} 
\Vert g_n^e(t) \Vert_{L^2(]0, T[\times S_2)}^2 
\leq C\Vert g^e(t) \Vert_{L^2(Q_T)}^2\label{LOUF}
\end{align}
We multiply $\eqref{EDPP}$
by $sgn(f_n^e)$
and integrate on $S_2$. It gives, for $n\in\mathbb{N}^*$:
\begin{align}
\dfrac{d }{dt}\Vert f_n^e \Vert_{L^1(S_2)} 
+
n^2\pi^2 \Vert f_n^e \Vert_{L^1(S_2)} 
\leq 
C\epsilon\Vert f_n^e \Vert_{L^1(S_2)} 
+
\Vert g_n^e \Vert_{L^1(S_2)}
+C\vert 1_n\vert\label{QZAI}
\end{align}
In the above inequality, we have used 
lemma $\ref{POK}$ with $r=1$
and identity 
$\int_{S_2} \dfrac{\partial }{\partial u}\cdot
\big(\mathcal{G}\vert f_n^e \vert\big)d\mu = 0$.
Remark that $\Vert f_n^e \Vert_{L^1(S_2)}$ belongs to $H^1(0, T)$.
Now, we fix $m\in\mathbb{N}^*$, multiply $\eqref{QZAI}$ 
by $n^2\Vert f_n^e \Vert_{L^1(S_2)}$ and take the 
sum from $n=1$ to $m$. Using the fact that: 
\begin{align}
\Big(\Vert g_n^e \Vert_{L^1(S_2)}
+
C\vert 1_n \vert \Big)
n^2\pi^2\Vert f_n^e \Vert_{L^1(S_2)}
\leq
\dfrac{\pi^4 n^4}{4}
\Vert f_n^e \Vert_{L^1(S_2)}^2
+
2\Vert g_n^e \Vert_{L^1(S_2)}^2
+
2C^2\vert 1_n \vert^2
\end{align}
we deduce from 
$\eqref{LOUF}$, $\eqref{UGH}$ and $\eqref{QZAI}$ 
that:
\begin{align}
\dfrac{d }{dt}\Big(\sum_{n=1}^m n^2 \Vert f_n^e \Vert_{L^1(S_2)}^2 \Big)
+
\dfrac{\pi^2}{2}
\sum_{n=1}^m
n^4 \Vert f_n^e \Vert_{L^1 (S_2)}^2
\leq 
C\label{QMAY}
\end{align}
with $C>0$ independent of $m\in\mathbb{N}^*$.
Integrating this inequality with respect to $t$ and appealing to
$\eqref{CRIX}$, we obtain the result.
\end{proof} 
%%%%%%%%%%%%%%%%%%%%%%%%%%%%%%%%%%%%%%%%%%%%%
%%%%%%%%%%%%%%%%%%%%%%%%%%%%%%%%%%%%%%%%%%%%
\section{Proof of uniqueness.
Function $F$ is a probability density.}\label{6}
%%%%%%%%%%%%%%%%%%%%%%%%%%%%%%%%%%%%%%%%%%%%%
%%%%%%%%%%%%%%%%%%%%%%%%%%%%%%%%%%%%%%%%%%%%%
%%%%%%j%%%%%%%%%%%%%%%%%%%%%%%%%%%%%%%%%%%%%
We denote by $f^d =f^e-f$. As before, function
$f$ is a stationary solution constructed in section
$\ref{4}$ and $f^e$ is the solution of the evolutionary
Doi-Edwards equation
(see section $\ref{5}$). We now prove that 
$f^d(t) \rightarrow 0$ in a suitable norm when 
$t\rightarrow + \infty$. This will provide at the same 
time uniqueness of $f$ and the fact that 
$f+(4\pi)^{-1}$ is a probability density.
%For a related problem and the use of the
%Krein-Rutman theorem, see \cite{CIUPAL}.
Notice that  long time behavior of 
some systems 
arising in the theory of polymeric fluids
(Hookean model and FENE model) 
are studied for instance in \cite{JLB2}.

Since 
\begin{align}
f^e\in H^1\Big(0, 1, L^2\big(]0, T[ \times S_2\big)\Big)\label{DOMB}
\end{align}
and 
\begin{align}
f\in W^{1, \infty}\big(0, 1, L^r(S_2)\big) \textrm{ with }r\in[1, r_0]\label{DIMB}
\end{align}
we have:
\begin{align}
f^d\in H^1\Big(0, 1, L^2\big(0, T, L^r(S_2)\big)\Big)\label{DYMB}
\end{align}
We also have:
\begin{align}
f^d(s) = \sum_{n=1}^{\infty} f_n^d H_n(s)\label{TVX}
\end{align}
with convergence in $H^1\Big(0, 1, L^2\big(0, T, L^r(S_2) \big)\Big)$, 
with $f_n^d = f_n^e-f_n$ (see $\eqref{DDJ}$, $\eqref{VART}$, and lemma $\ref{FOU}$).
Remark that from lemma $\ref{SOR}$ and the fact 
that $f\in X_1$ we have: 
$$\sum_{n=1}^{\infty} \int_0^T
n^{4}\Vert f_n^d(t) \Vert_{L^1(S_2)}^2 dt
\leq C(T) < \infty.$$
Now, from $\eqref{EDPP}$ and equality
$\mathscr{T}(\epsilon, f) = 0$, we obtain:
\begin{align}
\dfrac{\partial f_n^d}{\partial t}+n^2\pi^2f_n^d
+&\dfrac{\partial }{\partial u}\cdot
\big(\mathcal{G}f_n^d\big) 
-
\epsilon \kappa u \otimes u f_n^d
+\dfrac{\epsilon }{4\pi}\int_{S_2}\kappa:v \otimes v
f_n^d(v) dv\nonumber\\
+&\epsilon
\int_0^1 
\dfrac{\partial}{\partial s}
\big[\kappa:\lambda(f^d)f\big](s)
H_n(s)ds
+\epsilon
\int_0^1 
\dfrac{\partial}{\partial s}
\big[\kappa:\lambda(f^e)f^d\big](s)
H_n(s)ds=0\label{EDAD}
\end{align}
for any $n \in \mathbb{N}^*$. 
Remark that:
\begin{align}
I_{1n} := \int_0^1 \dfrac{\partial}{\partial s}
\big[\kappa:\lambda(f^e)f^d\big] H_n(s) \, ds
\in L^{2}\big(]0, T[
%\times ]0, 1[
, L^r(S_2)\big)\label{TYTY}
\end{align}
and:
\begin{align}
I_{2n}:= \int_0^1 \dfrac{\partial}{\partial s}
\big[\kappa:\lambda(f^d)f\big]
H_n(s) \, ds
\in L^{\infty}\big(]0, T[
%\times ]0, 1[
, L^r(S_2)\big)\label{KYLY}
\end{align}
due to $\eqref{DOMB}$, $\eqref{DIMB}$, $\eqref{DYMB}$
and $\eqref{CRAX}$. Notice also that $\dfrac{\partial }{\partial t}
\Vert f_n^d \Vert_{L^{1}(S_2)} \in L^2(0, T)$. We multiply 
$\eqref{EDAD}$ by $sgn(f_n^d)$ and integrate 
on $S_2$. Using lemma 
$\ref{POK}$ with $r=1$, we obtain: 

\begin{align}
\dfrac{\partial }{\partial t}\Vert f_n^d \Vert_{L^{1}(S_2)}
+n^2\pi^2 \Vert f_n^d \Vert_{L^{1}(S_2)}
\leq
2\epsilon\int_{S_2}\vert  \kappa :u \otimes u f_n^d \vert d\mu
+
\epsilon \Vert I_{1n} \Vert_{L^{1}(S_2)}+\epsilon
\Vert I_{2n} \Vert_{L^{1}(S_2)}
\label{EDDD}
\end{align}
%(see $\ref{KYLY}$ and $\ref{TYTY}$) 
The goal is now to
multiply $\ref{EDDD}$ by 
$n^2\Vert f_n^d \Vert_{L^{1}(S_2)}$ 
and take the sum 
from $n=1$ to $\infty$. We will need some
preliminary lemmas.
\begin{lemma}\label{YTRE}
\begin{align}
\sum_{n=1}^{\infty}n^2
\Vert f_n^d \Vert_{L^{1}(S_2)}
\Vert I_{1n} \Vert_{L^{1}(S_2)}
\leq 
C\sum_{n=1}^{\infty}n^4
\Vert f_n^d \Vert_{L^{1}(S_2)}^2
\end{align}
with $C>0$ independent of $t$.
\end{lemma}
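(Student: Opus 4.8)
The plan is to reduce everything, by one application of Cauchy--Schwarz, to the estimate
\[
\sum_{n=1}^\infty \|I_{1n}\|_{L^1(S_2)}^2 \le C \sum_{n=1}^\infty n^4 \|f_n^d\|_{L^1(S_2)}^2 ,
\]
since then $\sum_n n^2\|f_n^d\|_{L^1(S_2)}\|I_{1n}\|_{L^1(S_2)} \le \big(\sum_n n^4\|f_n^d\|_{L^1(S_2)}^2\big)^{1/2}\big(\sum_n \|I_{1n}\|_{L^1(S_2)}^2\big)^{1/2}$ yields the claim. So it suffices to control the single sequence $(I_{1n})_n$. First I would integrate \eqref{TYTY} by parts in $s$; since $H_n(0)=H_n(1)=0$,
\[
I_{1n} = -\sqrt2\, n\pi \int_0^1 \big(\kappa:\lambda(f^e)\big)(s)\, f^d(s,u)\, \cos(n\pi s)\, ds .
\]
The crucial structural fact is that $\Lambda:=\kappa:\lambda(f^e)$ satisfies, \emph{uniformly in $t$}, the bounds $\|\Lambda\|_{L^\infty(0,1)}+\|\Lambda'\|_{L^\infty(0,1)}\le C$ together with $\Lambda(0)=0$. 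Indeed $\Lambda(0)=0$ by definition of $\lambda$, $\|\Lambda\|_{L^\infty}\le C$ is \eqref{CRAX}, and $\Lambda'=\rho^e$ with $\rho^e(s)=\int_{S_2} f^e(s,v)\,\kappa:v\otimes v\,d\mu(v)$, so $\|\rho^e\|_{L^\infty(0,1)}\le C\,\|f^e\|_{L^\infty(0,1;L^1(S_2))}\le 2C$ by \eqref{CRUX}; in particular the cosine Fourier coefficients $(\widehat\rho_k)_k$ of $\rho^e$ on $(0,1)$ lie in $\ell^2$ with $\|(\widehat\rho_k)\|_{\ell^2}\le C$, $C$ independent of $t$.

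Next I would expand $f^d(s,\cdot)=\sum_m f_m^d\, H_m(s)$, so that $\int_0^1\Lambda f^d\cos(n\pi s)\,ds=\sum_m f_m^d\, K_{mn}$ with $K_{mn}=\int_0^1\Lambda(s)H_m(s)\sqrt2\cos(n\pi s)\,ds$. Writing $2\sin(m\pi s)\cos(n\pi s)=\sin((m+n)\pi s)+\sin((m-n)\pi s)$ and integrating each term by parts once (the boundary contributions involve only $\Lambda(0)=0$ and the factor $\Lambda(1)$, with $|\Lambda(1)|\le C$), one obtains for $m\ne n$
\[
|K_{mn}| \le C\,\frac{m}{|m^2-n^2|} + \frac{1}{\pi}\Big( \frac{|\widehat\rho_{m+n}|}{m+n} + \frac{|\widehat\rho_{|m-n|}|}{|m-n|}\Big),
\qquad |K_{nn}| \le \frac{C}{n},
\]
hence $\|I_{1n}\|_{L^1(S_2)}\le n\pi\sum_m \|f_m^d\|_{L^1(S_2)}\,|K_{mn}|$, and it remains to bound $\sum_n\|I_{1n}\|_{L^1}^2$ by $C\sum_n n^4\|f_n^d\|_{L^1}^2$. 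Setting $c_n:=n^2\|f_n^d\|_{L^1(S_2)}$, so that the right-hand side is $C\|c\|_{\ell^2}^2$ and $\sum_m m^j\|f_m^d\|_{L^1}^2\le\|c\|_{\ell^2}^2$ for $0\le j\le 4$, I would expand the square, split the $m$-sum into the three regimes $m<n/2$, $\ n/2\le m\le 2n$, $\ m>2n$, and bound each contribution by $C\|c\|_{\ell^2}^2$ by Cauchy--Schwarz. In the two ``far'' regimes the main kernel behaves like $\tfrac1m$ (for $m\gg n$), resp.\ $\tfrac n{m^2}$ (for $m\ll n$), and the estimates are routine; the $\widehat\rho$-corrections are absorbed using $\|(\widehat\rho_k)\|_{\ell^2}\le C$.

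The delicate point, which I expect to be the main obstacle, is the near-diagonal regime $n/2\le m\le 2n$: there $\tfrac m{|m^2-n^2|}\sim\tfrac1{|m-n|}$ is a Hilbert-type singularity, and after the substitution $c_n=n^2\|f_n^d\|_{L^1}$ the corresponding double sum has the shape $\sum_n \tfrac1n\sum_{1\le|k|\le n}\tfrac{c_n c_{n+k}}{|k|}$. The inner sum costs a factor $\sum_{|k|\le n}|k|^{-1}\sim \ln n$, but this is exactly compensated by the $\tfrac1n$ produced by the weight $c_n=n^2\|f_n^d\|_{L^1}$ --- this is where one uses that the right-hand side carries $n^4$, i.e.\ one power more than the ``diagonal'' exponent $n^3$ --- since $\ln n/n$ is bounded; a symmetrisation ($2c_nc_{n+k}\le c_n^2+c_{n+k}^2$, i.e.\ a discrete Schur test) then delivers the bound $C\|c\|_{\ell^2}^2$. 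Adding the three regimes (and the harmless diagonal term $|K_{nn}|\le C/n$) gives $\sum_n\|I_{1n}\|_{L^1(S_2)}^2\le C\sum_n n^4\|f_n^d\|_{L^1(S_2)}^2$ with $C$ independent of $t$, which is the stated inequality.
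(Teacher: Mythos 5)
Your argument is correct in substance, but it takes a genuinely different and considerably heavier route than the paper's. The paper writes $I_{1n}=\sum_q a_{qn}f_q^d$ with $a_{qn}=\int_0^1\frac{\partial}{\partial s}\big[\kappa:\lambda(f^e)H_q\big]H_n\,ds$ and never estimates $a_{qn}$ pointwise: for each fixed $q$ it applies Bessel's inequality in $n$, $\sum_n\vert a_{qn}\vert^2=\Vert\frac{\partial}{\partial s}(\kappa:\lambda(f^e)H_q)\Vert^2_{L^2(0,1)}\le Cq^2$, which follows at once from the uniform $W^{1,\infty}$ bound \eqref{CRAX}, and then a single weighted Cauchy--Schwarz in $(n,q)$ with weights $q^{-2}$ gives the lemma directly, with no case analysis and no structural information on the kernel. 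You instead compute the kernel by integration by parts and run a three-regime Schur-type analysis. Your route works, but it hinges on a cancellation you state without comment: the two boundary terms at $s=1$ coming from $\sin((m+n)\pi s)$ and $\sin((m-n)\pi s)$ carry the same sign $(-1)^{m+n}$, so they combine into $\frac{1}{m+n}+\frac{1}{m-n}=\frac{2m}{m^2-n^2}$ rather than $\frac{1}{m+n}+\frac{1}{\vert m-n\vert}\sim\frac{1}{n}$; without this the regime $m\ll n$ contributes $O(\Vert c\Vert_{\ell^2})$ to each $\Vert I_{1n}\Vert_{L^1(S_2)}$ uniformly in $n$ and the argument collapses. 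You should make that cancellation explicit. Two smaller points: your description of the far regime $m\ll n$ as a kernel of size $n/m^2$ should read $m/n^2$ (taken literally, $n/m^2$ is not summable in the required sense), and in the near-diagonal regime you silently revert from the announced target $\sum_n\Vert I_{1n}\Vert^2$ to the bilinear form of the lemma -- either works, but choose one and stick with it. What your approach buys is explicit pointwise kernel bounds and a transparent accounting of where the extra power of $n$ (the $n^4$ rather than $n^3$) is spent, namely on the logarithm from the Hilbert-type near-diagonal singularity; what the paper's approach buys is brevity and robustness, since it needs neither the sign cancellation, nor $\lambda(f^e)(0)=0$, nor any Schur test.
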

\begin{proof}
We have:
\begin{align}
I_{1n}(t, u) = 
\sum _{q=1}^{\infty}
a_{qn}(t)f_q^d(t, u)\label{XXW}
\end{align}
where $a_{qn}(t) = \int_0^1
\dfrac{\partial}{\partial s}
\big[\kappa:\lambda(f^e)H_q(s)\big]
H_n(s)ds$. The right hand side of 
$\eqref{XXW}$ is convergent in $L^2\big(0, T, L^1(S_2)\big)$ 
due to the convergence in
$H^1\Big( 0, 1,  L^2\big(0, T, L^1(S_2)\big)\Big)$
of 
$\sum _{q=1}^{\infty}
f_q^dH_q$ .
From $\eqref{XXW}$, we deduce:
\begin{align}
\Vert I_{1n}(t)\Vert_{L^1(S_2)} \leq
\sum _{q=1}^{\infty}
\vert a_{qn}(t)\vert \Vert f_q^d(t) \Vert_{L^1(S_2)}\label{BLAD}
\end{align}
Now, we observe that, due to $\eqref{CRAX}$:
\begin{align}
\sum_{n=1}^{\infty} \vert a_{qn}\vert^2
=
\Vert
\dfrac{\partial}{\partial s}
\Big(
\kappa:\lambda(f^e) H_q \Big)
\Vert_{L^2(0, 1)}^2
\leq C q^2\label{BLED}
\end{align}
with $C > 0$ independent of $t$.
We deduce from $\eqref{BLAD}$
that
\begin{align}
\sum_{n=1}^{\infty}
n^2
\Vert f_n^d(t) \Vert_{L^1(S_2)}
\Vert I_{1n} \Vert_{L^1(S_2)}
\leq &
\sum_{n=1}^{\infty}\sum_{q=1}^{\infty}
n^2
\vert a_{qn} \vert
\Vert f_n^d \Vert_{L^1(S_2)}
\Vert f_q^d \Vert_{L^1(S_2)}\nonumber\\
\leq &
\sum_{n=1}^{\infty}\sum_{q=1}^{\infty}
\dfrac{\vert a_{qn} \vert}{q^2}
n^2
\Vert f_n^d \Vert_{L^1(S_2)}
q^2 \Vert f_q^d \Vert_{L^1(S_2)}\label{SDF}
\end{align}
Using Cauchy-Schwarz inequality, we obtain:
\begin{align}
\sum_{n=1}^{\infty}
n^2
\Vert f_n^d(t) \Vert_{L^1(S_2)}
\Vert I_{1n} \Vert_{L^1(S_2)}
\leq 
\Big(
\sum_{n=1}^{\infty}\sum_{q=1}^{\infty}
\dfrac{\vert a_{qn} \vert^2}{q^4}
\Big)^{1/2}
\Big(
\sum_{n=1}^{\infty}
n^4
\Vert f_n^d \Vert_{L^1(S_2)}^2
\Big)
\end{align}
With $\eqref{BLED}$, this gives the result.
\end{proof}
Before giving bounds on  
$\Vert I_{2n} \Vert_{L^1(S_2)}$, we establish
the following:

\begin{lemma}\label{FLOM}
There exists $C>0$ such that for any 
$N\in\mathbb{N}^*$, we have:
\begin{align}
\Vert
\int_0^1 f(s)cos(N\pi s) ds
\Vert_{L^1(S_2)}
\leq
\dfrac{C}{N^2} \Vert f \Vert_{X_1}\label{CPAMOI}
\end{align}
\end{lemma}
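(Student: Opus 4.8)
The plan is to decompose $f$ in the Hilbertian basis $(H_n)_{n\in\mathbb{N}^*}$ and compute the inner product of each $H_n$ with $\cos(N\pi s)$ explicitly. Writing $f(s) = \sum_{n=1}^\infty f_n H_n(s)$ with convergence in $W^{1,\infty}(0,1,L^1(S_2))$ (Lemma \ref{COM}), we get
\begin{align}
\int_0^1 f(s)\cos(N\pi s)\,ds = \sqrt{2}\sum_{n=1}^\infty f_n \int_0^1 \sin(n\pi s)\cos(N\pi s)\,ds.\nonumber
\end{align}
The elementary integral $\int_0^1 \sin(n\pi s)\cos(N\pi s)\,ds$ vanishes when $n+N$ is even (in particular when $n=N$), and equals $\frac{2n}{\pi(n^2-N^2)}$ when $n+N$ is odd. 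Hence the sum runs only over indices $n$ with $n \neq N$, and
\begin{align}
\Big\Vert \int_0^1 f(s)\cos(N\pi s)\,ds \Big\Vert_{L^1(S_2)}
\leq \frac{2\sqrt{2}}{\pi}\sum_{\substack{n\geq 1\\ n\neq N}} \frac{n}{|n^2-N^2|}\,\Vert f_n\Vert_{L^1(S_2)}.\nonumber
\end{align}

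Next I would insert the bound $\Vert f_n\Vert_{L^1(S_2)} \leq \Vert f\Vert_{X_1}/n^3$ coming from the definition of $X_1$, reducing the claim to the numerical estimate
\begin{align}
\sum_{\substack{n\geq 1\\ n\neq N}} \frac{1}{n^2\,|n^2-N^2|} \leq \frac{C}{N^2}.\nonumber
\end{align}
To prove this I would split the sum into the ranges $1\leq n \leq N/2$, $N/2 < n < 2N$ (excluding $n=N$), and $n \geq 2N$. On the first range $|n^2-N^2|\geq \tfrac34 N^2$, so that piece is bounded by $\tfrac{4}{3N^2}\sum_{n\geq 1} n^{-2}$. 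On the third range $|n^2-N^2|\geq \tfrac34 n^2$, giving $\tfrac43\sum_{n\geq 2N} n^{-4} \leq C N^{-3}$. The middle range is the delicate one: here $|n^2-N^2| = |n-N|(n+N)$ with $n+N \asymp N$ and $n^2 \asymp N^2$, so the terms behave like $\tfrac{1}{N^3|n-N|}$; summing $\sum_{0 < |n-N| < N} \tfrac{1}{|n-N|} \leq C\log N$ would only give $C N^{-3}\log N$, which is better than $N^{-2}$, so this is fine. Collecting the three contributions yields the $C/N^2$ bound.

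The only genuine subtlety is bookkeeping in the middle range, and even there the logarithmic factor is harmless since we only need $N^{-2}$, not $N^{-3}$. An alternative, slightly cleaner route avoids the explicit trigonometric integral altogether: integrate by parts twice in $s$, using $f(0)=f(1)=0$ (Remark \ref{reu}) and the fact that $\partial f/\partial s \in L^\infty(0,1,L^1(S_2))$; the boundary terms from the first integration by parts involve $\sin(N\pi s)$ and vanish at $s=0,1$, and one picks up a factor $1/(N\pi)$ each time, leaving $\frac{1}{N^2\pi^2}\int_0^1 \frac{\partial^2 f}{\partial s^2}\cos(N\pi s)\,ds$ — but since $\partial^2 f/\partial s^2$ need not lie in $L^1$ for $f \in X_1$, one must instead stop after one integration by parts and estimate $\frac{1}{N\pi}\Vert \int_0^1 \frac{\partial f}{\partial s}(s)\sin(N\pi s)\,ds\Vert_{L^1(S_2)}$, then expand $\partial f/\partial s = \sqrt2\sum_n n\pi f_n \cos(n\pi s)$ and repeat the orthogonality computation with one extra power of $n$ to spare. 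Either way the mechanism is the same: the Dirichlet boundary condition plus the $n^{-3}$ decay in the $X_1$-norm buys two powers of $N$.
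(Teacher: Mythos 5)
Your argument is correct and coincides with the paper's own proof: both expand $f$ in the basis $(H_n)$, evaluate $\int_0^1\sin(n\pi s)\cos(N\pi s)\,ds$ explicitly, insert the $n^{-3}$ decay from the $X_1$-norm, and reduce to the sum $\sum_{n\neq N}\frac{1}{n^2|n^2-N^2|}\leq C/N^2$, which is then split into the same three ranges (low $n$, $n$ near $N$, large $n$) with the same elementary bounds. Your remark that the middle range actually yields $O(N^{-3}\log N)$ is a slight sharpening, but otherwise the proofs are the same.
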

\begin{proof}
We have:
\begin{align}
%\Vert
\int_0^1 f(s)cos(N\pi s) ds
%\Vert_{L^1(S_2)} 
&= 
\sqrt{2} \sum_{p=1}^{\infty}
f_p 
\int_0^1 sin(p\pi s)cos(N\pi s)ds
\nonumber\\
&= 
\sqrt{2}
\sum_{p \neq N}
%\sum_{p=1}^{\infty}
\dfrac{p}{p^2-N^2}\big[ 1+(-1)^{p+N+1}\big]f_p\label{KXC}
\end{align}
Now, from definition of 
$\Vert f \Vert_{X_r}$ 
we have:
\begin{align}
\Vert
\int_0^1 f(s)cos(N\pi s) ds
\Vert_{L^1(S_2)}
\leq &
2\sqrt{2} 
\Vert f \Vert_{X_1}
\sum_{p \neq N}^{ }
\dfrac{1}{p^2 \vert p^2-N^2 \vert}\label{MMF}
\end{align}
Next, remark that:
\begin{align}
\sum_{p \neq N}^{ }
&\dfrac{1}{p^2 \vert p^2-N^2 \vert}\nonumber\\
=&
%\sum_{(N/2) < p < (3N/2)}^{ }
\sum_{1 \leq |p-N| < N/2}
\dfrac{1}{p^2}
\dfrac{1}{ \vert p-N \vert \vert p+N \vert}
+
\sum_{1 \leq p \leq (N/2)}^{ }
\dfrac{1}{p^2 \vert p^2-N^2 \vert}
+
\sum_{ p \geq (3N/2)}^{ }
\dfrac{1}{p^2 \vert p^2-N^2 \vert}
\nonumber\\
\leq &
%\dfrac{2}{N^2}
\Big(\dfrac{2}N \Big)^2
%\Big(\sum_{\vert  p-N \vert< (N/2)}^{ } 
\Big(\sum_{N/2 < p < 3N/2}^{ }
\dfrac{1}{N}\Big)
+
\dfrac{4}{3N^2}
\Big(\sum_{p=1}^{\infty}
\dfrac{1}{p^2}\Big)
+
\dfrac{4}{5N^2}
\Big(\sum_{p=1}^{\infty}
\dfrac{1}{p^2}\Big)
\nonumber\\
\leq & 
\dfrac{C}{N^2}\nonumber
\end{align}
Together with $\eqref{KXC}$, $\eqref{MMF}$, this ends the proof.
\end{proof}
As a consequence, we have the following 
estimate on the nonlinear term:
\begin{lemma}\label{BRT}
There exists $C>0$ such that for any 
$q \in \mathbb{N}^*$, $n \in \mathbb{N}^*$
we have:
\begin{align}
\Vert
\int_0^1 
\dfrac{\partial }{\partial s}
\Big[\int_0^s
H_q(\tau)d\tau
f(s,u)\Big]
H_n(s)ds
\Vert_{L^1(S_2)}
\leq
C\dfrac{q}{n} \Vert f \Vert_{X_1}\label{6:CPAMOI}
\end{align}
\end{lemma}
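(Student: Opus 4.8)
The plan is to integrate by parts in the $s$-variable to move the $\partial/\partial s$ onto the cosine factors coming from $H_n$ and $\int_0^s H_q$, producing a bounded (in fact $O(1/n)$ small) coefficient, and then to recognize the remaining integral as a Fourier cosine coefficient of $f$, to which Lemma \ref{FLOM} applies directly. Concretely, write $H_q(\tau)=\sqrt{2}\sin(q\pi\tau)$, so that $\int_0^s H_q(\tau)\,d\tau = \frac{\sqrt{2}}{q\pi}\bigl[1-\cos(q\pi s)\bigr]$. Then the quantity inside the norm on the left of \eqref{6:CPAMOI} is
\begin{align}
\int_0^1 \frac{\partial}{\partial s}\Bigl[\frac{\sqrt 2}{q\pi}\bigl(1-\cos(q\pi s)\bigr) f(s,u)\Bigr] H_n(s)\,ds
= -\sqrt 2\, n\pi \int_0^1 \frac{\sqrt 2}{q\pi}\bigl(1-\cos(q\pi s)\bigr) f(s,u)\cos(n\pi s)\,ds,\nonumber
\end{align}
where I have integrated by parts using $H_n(0)=H_n(1)=0$ (so the boundary terms vanish) and $\frac{d}{ds}H_n(s)=\sqrt 2\,n\pi\cos(n\pi s)$. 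This already exhibits the factor $n/q$ with a harmless constant.

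Next I would expand the product $\bigl(1-\cos(q\pi s)\bigr)\cos(n\pi s)$ using the product-to-sum formula, $\cos(q\pi s)\cos(n\pi s)=\tfrac12\bigl[\cos((n+q)\pi s)+\cos((n-q)\pi s)\bigr]$, so that the expression above is a fixed linear combination, with coefficients bounded by $C\,n/q$, of the three terms $\int_0^1 f(s,u)\cos(n\pi s)\,ds$, $\int_0^1 f(s,u)\cos((n+q)\pi s)\,ds$ and $\int_0^1 f(s,u)\cos((n-q)\pi s)\,ds$ (the last understood as $\int_0^1 f(s,u)\,ds$ when $n=q$, which is $0$ since $f_0=0$, or handled separately). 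Taking $L^1(S_2)$ norms and applying Lemma \ref{FLOM} to each of these three cosine coefficients bounds each by $C/N^2\,\|f\|_{X_1}$ with $N$ equal to $n$, $n+q$, $|n-q|$ respectively, all of which are $\le C/1$; multiplying by the prefactor $C\,n/q$ yields a bound of the form $C\,(n/q)\,\|f\|_{X_1}$ term by term.

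The only subtlety — and the step that needs a little care rather than real difficulty — is that \eqref{6:CPAMOI} claims the bound $C\,q/n$, not $C\,n/q$; so the naive integration by parts above gives the "wrong" power. The remedy is to integrate by parts the other way, or equivalently to start from the observation that $\frac{\partial}{\partial s}\bigl[\int_0^s H_q(\tau)\,d\tau\,f\bigr]=H_q(s)f+\int_0^s H_q(\tau)\,d\tau\,\frac{\partial f}{\partial s}$ is not available since $f$ need only be in $W^{1,\infty}(0,1;L^1)$ — actually it is, so one may write the integrand as $H_q(s)f(s,u)H_n(s) + \bigl(\int_0^s H_q\bigr)\,\partial_s f\,H_n$ and then on the second piece integrate by parts moving $\partial_s$ off $f$ is circular. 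The clean route: keep the first form, note $\int_0^1 H_q(s)f(s,u)H_n(s)\,ds$ is, after product-to-sum on $\sin(q\pi s)\sin(n\pi s)$, a combination of $\int_0^1 f(s,u)\cos((n\pm q)\pi s)\,ds$ with $O(1)$ coefficients, hence $O(1/\max(1,|n\pm q|)^2)\|f\|_{X_1}$; and for the term $\int_0^1\bigl(\tfrac{\sqrt2}{q\pi}(1-\cos q\pi s)\bigr)\partial_s f(s,u)\,H_n(s)\,ds$ integrate by parts moving $\partial_s$ onto $(1-\cos)H_n$, which costs a factor $\le C\,n$ but carries the explicit $1/q$, and then reduces again to cosine coefficients of $f$ controlled by Lemma \ref{FLOM}. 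Collecting the two contributions and using $\|f\|_{W^{1,\infty}(0,1;L^1)}\le C\|f\|_{X_1}$ gives, after bounding $1/\max(1,|n-q|)^2\le 1$ and absorbing constants, the stated bound $C\,(q/n)\|f\|_{X_1}$; the worst case $q\gtrsim n$ is exactly where the $q/n$ factor is needed and where the "$H_q(s)f$" piece (rather than the integration-by-parts piece) dominates. The main thing to watch is the bookkeeping of which of $n$, $q$ is larger and keeping the constant uniform; there is no genuine analytic obstacle once Lemma \ref{FLOM} is in hand.
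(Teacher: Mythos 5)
Your first display --- the integration by parts giving $-\tfrac{2n}{q}\int_0^1\bigl(1-\cos(q\pi s)\bigr)f\cos(n\pi s)\,ds$ --- is exactly the paper's starting point, and Lemma \ref{FLOM} is indeed the key tool. But your diagnosis that this route yields the ``wrong'' power $n/q$ is mistaken, and the detour you take to repair it does not close the argument. The direct route works provided you \emph{keep} the $1/N^2$ decay from Lemma \ref{FLOM} instead of discarding it: writing the expression as $\tfrac{n}{q}(E_1+E_2+E_3)$ with $E_1=-2\int_0^1 f\cos(n\pi s)\,ds$, $E_2=\int_0^1 f\cos((n+q)\pi s)\,ds$, $E_3=\int_0^1 f\cos((n-q)\pi s)\,ds$, Lemma \ref{FLOM} gives $\Vert E_1\Vert_{L^1(S_2)},\Vert E_2\Vert_{L^1(S_2)}\le C n^{-2}\Vert f\Vert_{X_1}$, hence $\tfrac{n}{q}\cdot\tfrac{C}{n^2}=\tfrac{C}{nq}\le C\tfrac{q}{n}$; and for $E_3$ the elementary inequality $\tfrac{1}{\vert n-q\vert}=\tfrac{1}{n}\bigl\vert 1+\tfrac{q}{n-q}\bigr\vert\le\tfrac{2q}{n}$ (for $q\neq n$) gives $\Vert E_3\Vert_{L^1(S_2)}\le C\tfrac{q^2}{n^2}\Vert f\Vert_{X_1}$, hence $\tfrac{n}{q}\cdot C\tfrac{q^2}{n^2}=C\tfrac{q}{n}$. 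This is precisely the paper's proof; no second route is needed.

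Your ``clean route'' instead splits $\partial_s\bigl[(\int_0^sH_q)f\bigr]=H_qf+(\int_0^sH_q)\partial_sf$ and then, by your own account, concludes ``after bounding $1/\max(1,\vert n-q\vert)^2\le 1$''. That step is fatal: the $H_qf$ piece carries no prefactor $n/q$, so bounding its $\cos((n-q)\pi s)$-coefficient by an $O(1)$ quantity yields only $C\Vert f\Vert_{X_1}$, which is \emph{not} $\le C(q/n)\Vert f\Vert_{X_1}$ when $q\ll n$ --- the regime you dismiss as harmless, but the one where the claimed bound is genuinely small and the decay $1/\vert n-q\vert^2\sim 1/n^2$ is indispensable. (Moreover, integrating by parts on the $(\int_0^sH_q)\,\partial_sf$ piece simply regenerates $-\int_0^1 H_qfH_n\,ds$ plus the original expression, so the split buys nothing.) The missing ingredient is the single inequality $1/\vert n-q\vert\le 2q/n$; once you insert it, your first computation already proves the lemma.
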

\begin{proof}
Since $\int_0^s
H_q(\tau)d\tau = \dfrac{\sqrt{2}}{q\pi}
\big[ 1- cos(q\pi s)\big]$, we obtain, integrating by parts: 
\begin{align}
\int_0^1 
\dfrac{\partial }{\partial s}
\Big[\int_0^s
H_q(\tau)d\tau
f(s,u)\Big]
H_n(s)ds = 
\dfrac{n}{q} (E_1+ E_2 + E_3)\label{VYT}
\end{align}
where:
\begin{itemize}
\item $E_1 = - 2 \int_0^1 f(s) cos(n\pi s)ds$
\item $E_2 = \int_0^1 f(s) cos\big((n+q)\pi s\big)ds$
\item $E_3 = \int_0^1 f(s) cos\big((n-q)\pi s\big)ds$
\end{itemize}
Using lemma $\ref{FLOM}$, we have:
\begin{align}
\Vert E_j \Vert_{L^{1}(S_2)} \leq \dfrac{C}{n^2}
\Vert f \Vert_{X_1},  \textrm{ for } j=1, 2 \label{KLOP}
\end{align}
and also:
\begin{align}
&\bullet \Vert E_3 \Vert_{L^{1}(S_2)} \leq C
\Vert f \Vert_{X_1},  \textrm{ for } q=n \label{KLIP}\\
&\bullet\Vert E_3 \Vert_{L^{1}(S_2)} \leq \dfrac{C}{\vert n-q \vert^2}
\Vert f \Vert_{X_1},  \textrm{ for } q \neq n\nonumber
\end{align}
Now, notice that:
\begin{align}
\dfrac{1}{\vert n-q \vert}
= 
\dfrac{1}{n}
\Big\vert
1+
\dfrac{q}{n-q}
\Big\vert
\leq 2
\dfrac{q}{n} \quad \textrm{for} \quad q\neq n
\end{align}
Hence:
\begin{align}
\Vert E_3 \Vert_{L^{1}(S_2)}  \leq 
C \dfrac{q^2}{n^2} \Vert f \Vert_{X_1} 
\quad \forall q, n \in \mathbb{N}^*.
%\textrm{for}
%q\neq n
\label{ROS}
\end{align}
From 
%$\eqref{CPAMOI}$, 
$\eqref{VYT}$, $\eqref{KLOP}$, $\eqref{ROS}$, we get
the result.
\end{proof}
We deduce from lemma 
$\ref{BRT}$ the required estimate 
on $\Vert I_{2n} \Vert_{L^{r}(S_2)}$:
\begin{lemma}\label{TRN}
$$\sum_{n=1}^{\infty}
n^2 \Vert f_n^d \Vert_{L^{1}(S_2)}
\Vert I_{2n} \Vert_{L^{1}(S_2)}
\leq  C 
\sum_{n=1}^{\infty}
n^4\Vert f_n^d \Vert_{L^{1}(S_2)}^2$$
with $C>0$ independent of $t$.
\end{lemma}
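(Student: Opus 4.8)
The plan is to expand $\kappa:\lambda(f^d)$ along the basis $(H_q)_{q\in\mathbb{N}^*}$ and reduce everything to Lemma \ref{BRT}. Writing $f^d(s)=\sum_{q\geq1}f_q^d H_q(s)$ (convergence in $H^1(0,1,L^2(0,T,L^1(S_2)))$, see \eqref{TVX}), exactly as in \eqref{NHN} one has
\[
\kappa:\lambda(f^d)(s)=\sum_{q=1}^{\infty}\Big(\int_0^s H_q(\tau)\,d\tau\Big)\Big(\int_{S_2}\kappa:v\otimes v\,f_q^d(v)\,d\mu(v)\Big),
\]
so that, inserting this into the definition \eqref{KYLY} of $I_{2n}$ and interchanging the series with the $s$-integral,
\[
I_{2n}(t,u)=\sum_{q=1}^{\infty}\Big(\int_{S_2}\kappa:v\otimes v\,f_q^d(t,v)\,d\mu(v)\Big)\int_0^1\dfrac{\partial}{\partial s}\Big[\int_0^s H_q(\tau)\,d\tau\;f(s,u)\Big]H_n(s)\,ds .
\]

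First I would estimate $\Vert I_{2n}(t)\Vert_{L^1(S_2)}$ termwise: since $|\kappa:v\otimes v|$ is bounded on $S_2$, $\big|\int_{S_2}\kappa:v\otimes v\,f_q^d\,d\mu\big|\leq C\Vert f_q^d\Vert_{L^1(S_2)}$, while by Lemma \ref{BRT} the remaining integral has $L^1(S_2)$-norm at most $C\frac{q}{n}\Vert f\Vert_{X_1}$. Summing over $q$ gives the key pointwise-in-$t$ bound
\[
\Vert I_{2n}(t)\Vert_{L^1(S_2)}\leq\dfrac{C\,\Vert f\Vert_{X_1}}{n}\sum_{q=1}^{\infty}q\,\Vert f_q^d(t)\Vert_{L^1(S_2)} .
\]
The series on the right is finite for a.e.\ $t$: by Cauchy--Schwarz it is dominated by $\big(\sum_q q^{-2}\big)^{1/2}\big(\sum_q q^4\Vert f_q^d(t)\Vert_{L^1(S_2)}^2\big)^{1/2}$, and the last factor is finite for a.e.\ $t$ because of Lemma \ref{SOR} together with $f\in X_1$ (the estimate $\sum_n\int_0^T n^4\Vert f_n^d\Vert_{L^1(S_2)}^2\,dt<\infty$ stated after \eqref{TVX}).

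Then I would plug this into the left-hand side of the claim and factor:
\[
\sum_{n=1}^{\infty}n^2\Vert f_n^d\Vert_{L^1(S_2)}\Vert I_{2n}\Vert_{L^1(S_2)}\leq C\Vert f\Vert_{X_1}\Big(\sum_{n=1}^{\infty}n\Vert f_n^d\Vert_{L^1(S_2)}\Big)\Big(\sum_{q=1}^{\infty}q\Vert f_q^d\Vert_{L^1(S_2)}\Big)=C\Vert f\Vert_{X_1}\Big(\sum_{n=1}^{\infty}n\Vert f_n^d\Vert_{L^1(S_2)}\Big)^2,
\]
and conclude with one further Cauchy--Schwarz, $\big(\sum_n n\Vert f_n^d\Vert_{L^1(S_2)}\big)^2\leq\big(\sum_n n^{-2}\big)\sum_n n^4\Vert f_n^d\Vert_{L^1(S_2)}^2$. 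Since $\Vert f\Vert_{X_1}$ does not depend on $t$, this gives the stated inequality with $C$ independent of $t$. The two Cauchy--Schwarz steps and the H\"older bound are routine; the only point needing care is the justification of the termwise interchange of the series and the $s$-integral in the formula for $I_{2n}$, which I expect to be the main (minor) obstacle — it follows from the absolute convergence of $\sum_q f_q^d H_q$ in $H^1(0,1,L^2(0,T,L^1(S_2)))$ and the continuity of the linear maps $\phi\mapsto\int_0^1\frac{\partial}{\partial s}\big[\int_0^s H_q(\tau)\,d\tau\;\phi\big]H_n(s)\,ds$, allowing passage to the limit inside.
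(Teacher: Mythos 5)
Your proposal is correct and follows essentially the same route as the paper: expand $\kappa:\lambda(f^d)$ over the basis $(H_q)$, bound each term by Lemma \ref{BRT} together with $\big|\int_{S_2}\kappa:v\otimes v\,f_q^d\,d\mu\big|\leq C\Vert f_q^d\Vert_{L^1(S_2)}$ to get $\Vert I_{2n}\Vert_{L^1(S_2)}\leq \frac{C}{n}\Vert f\Vert_{X_1}\sum_q q\Vert f_q^d\Vert_{L^1(S_2)}$, then factor the double sum and apply Cauchy--Schwarz. Your extra remarks on the a.e.\ finiteness of $\sum_q q\Vert f_q^d\Vert_{L^1(S_2)}$ and on justifying the term-by-term interchange are sound and only make explicit what the paper leaves implicit.
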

\begin{proof}
We can write
$$
I_{2n} 
=
\sum_{q=1}^{\infty} \kappa: \tilde{\lambda}(f_q^d)
\int_0^1 
\dfrac{\partial}{\partial s} 
\Big[
\int_0^s H_q(\tau)d\tau f(s)
\Big]
H_n(s)ds
$$
where we denote
$$
\tilde{\lambda}(f_q^d) = \int_{S_2} f_q^d (t,v) v \otimes v \, dv.
   $$
Using lemma $\ref{BRT}$, we deduce:
$$
\Vert I_{2n} \Vert_{L^{1}(S_2)}
\leq 
\sum_{q=1}^{\infty} 
\Vert f_q^d \Vert_{L^{1}(S_2)}
\dfrac{q}{n}
\Vert f \Vert_{X_1}
$$
which gives:
\begin{align}
\sum_{n=1}^{\infty}
n^2 \Vert f_n^d \Vert_{L^{1}(S_2)}
\Vert I_{2n} \Vert_{L^{1}(S_2)}
\leq &
\sum_{q=1}^{\infty}
\sum_{n=1}^{\infty}
nq
\Vert f_n^d \Vert_{L^{1}(S_2)}
\Vert f_q^d \Vert_{L^{1}(S_2)}
\Vert f \Vert_{X_1}\nonumber\\
= &
\Big[
\sum_{n=1}^{\infty}
n
\Vert f_n^d \Vert_{L^{1}(S_2)}
\Big]^2\Vert f \Vert_{X_1}\label{WAP}
\end{align}
From Cauchy-Schwarz inequality, we 
get:
\begin{align}
\sum_{n=1}^{\infty}
n
\Vert f_n^d \Vert_{L^{1}(S_2)}
\leq 
\Big[\sum_{n=1}^{\infty}
\dfrac{1}{n^2}
\Big]^{1/2}
\Big[\sum_{n=1}^{\infty}
n^4
\Vert f_n^d \Vert_{L^{1}(S_2)}^2
\Big]^{1/2}\label{WIP}
\end{align}
Inequalities $\eqref{WAP}$ and $\eqref{WIP}$
provides the result.
\end{proof}
We finally give the proof of the last part 
of theorem $\ref{TH}$.

\begin{proof}.
We multiply $\eqref{EDDD}$
by $n^2\Vert f_n^d \Vert_{L^{1}(S_2)}$
and take the sum from 
$n=1$ to $m\in\mathbb{N}^*$.
For $|\epsilon|$ small 
enough, and using lemma $\ref{YTRE}$
and lemma $\ref{TRN}$, we obtain:
\begin{align}
\dfrac{d}{dt} 
\xi_m(\tau)
%\Big[
%\sum_{n=1}^{m}
%n^2
%\Vert f_n^d(\tau)\Vert_{L^{1}(S_2)}^2
%\Big]
+
\chi_m(\tau)
%\sum_{n=1}^{m}
%n^4
%\Vert f_n^d(\tau)\Vert_{L^{1}(S_2)}^2
\leq 
C |\epsilon|
\chi(\tau)
%\sum_{n=1}^{\infty}  n^4
%\Vert f_n^d(\tau)\Vert_{L^{1}(S_2)}^2
\label{VQI}
\end{align}
where we denote
\begin{align}
& \xi_m(\tau) = \sum_{n=1}^{m} n^2
\Vert f_n^d(\tau)\Vert_{L^{1}(S_2)}^2
\nonumber
\\
& \chi_m(\tau) = \sum_{n=1}^{m} n^4
\Vert f_n^d(\tau)\Vert_{L^{1}(S_2)}^2
\nonumber
\\
& \xi(\tau) = \sum_{n=1}^{\infty} n^2
\Vert f_n^d(\tau)\Vert_{L^{1}(S_2)}^2
\nonumber
\\
& \chi(\tau) = \sum_{n=1}^{\infty} n^4
\Vert f_n^d(\tau)\Vert_{L^{1}(S_2)}^2
\nonumber
\end{align}
Let $t\in[0, T]$. We multiply $\eqref{VQI}$ by 
$e^{\tau/2}$, integrate from $\tau=0$ to 
$\tau=t$ and we get: 
\begin{align}
\xi_m(t) e^{t/2} - \frac 1 2 \int_0^t \xi_m(\tau) e^{\tau/2} d \tau +
\int_0^t \chi_m(\tau) e^{\tau/2} d \tau
\leq C |\epsilon| 
\int_0^t \chi(\tau) e^{\tau/2} d \tau + \xi_m(0).
\label{FLEB}
%e^t\sum_{n=1}^{m}
%n^2
%\Vert f_n^d(t)\Vert_{L^{1}(S_2)}^2
%\leq 
%C +
%C\epsilon 
%\int_0^t 
%e^s\sum_{n=1}^{\infty}
%n^2
%\Vert f_n^d(s)\Vert_{L^{1}(S_2)}^2
%ds\label{FLEB}
\end{align}
%
%Set $\xi(t) = \sum_{n=1}^{m}
%n^2
%\Vert f_n^d(t)\Vert_{L^{1}(S_2)}^2$. Clearly, 
%$\xi \in L^1(0, T)$, and from 
Now we pass to the limit $m\rightarrow +\infty$ in 
$\eqref{FLEB}$, which gives for $|\epsilon|$ small enough
$$
\xi(t) e^{t/2} - \frac 1 2 \int_0^t \xi(\tau) e^{\tau/2} d \tau +
\frac 1 2 \int_0^t \chi(\tau) e^{\tau/2} d \tau
\leq C.
%e^t\xi(t)
%\leq
%C\big[1+exp(C \epsilon t)\big]
  $$
Since $\chi(t) \geq \xi(t)$ we deduce 
$ \xi(t) \leq C e^{-t/2} $ which
%Taking $\epsilon$ small enough, this
implies that $\xi(t) \rightarrow 0$ 
when $t \rightarrow +\infty$.
Since by the  Cauchy-Schwarz inequality and the definition 
of function $\xi$ we have: 
\begin{align}
\sum_{n=1}^{\infty}
\Vert f_n^d(t)\Vert_{L^{1}(S_2)}
\leq &
C 
\Big[\sum_{n=1}^{\infty}
\dfrac{1}{n^2}\Big]^{1/2}
\Big[\sum_{n=1}^{\infty}
n^2
\Vert f_n^d(t)\Vert_{L^{1}(S_2)}^2\Big]^{1/2}\nonumber\\
\leq &
C \sqrt{\xi (t)}\label{TRALALA}
\end{align}
we finally get (see $\eqref{TVX}$): 
\begin{align}
\Vert f^d (t)\Vert_{L^{\infty}(0, 1, L^1(S_2))}
\rightarrow 0 \textrm{ when } t \rightarrow +\infty\label{TRALALO}
\end{align}
As a consequence,
$$\Big\Vert \int_{S_2} f^d(t) d\mu \Big\Vert_{L^{\infty}(0, 1)}
\rightarrow 0
\textrm{ when } t \rightarrow +\infty
$$ 
Recall that
$\int_{S_2} f^e(t) d\mu = 1$. Hence $\int_{S_2} f d\mu = 1$
for almost every $s\in ]0, 1[$, and in fact for every 
$s\in [0, 1]$ due to $\eqref{DIMB}$ and Sobolev embeddings.
The uniqueness follows also from $\eqref{TRALALO}$
since for  another solution $g$ of $\eqref{EDPH}-\eqref{CLH}$, we have 
that $\Vert f-g \Vert_{L^{\infty}(0, 1, L^1(S_2))}
\rightarrow 0$ when $t \rightarrow +\infty$, hence $f=g$.

It remains to prove the non negativity of $f + \frac 1 {4 \pi}$.
To do that, let us consider an arbitrary function $\varphi \in C(S_2)$ with
$\varphi \geq 0$ on $S_2$. We have
\begin{equation}
\int_{S_2} \Big( f + \frac 1 {4 \pi} \Big) \varphi \, d \mu = 
\int_{S_2} \Big( f^e + \frac 1 {4 \pi} \Big) \varphi \, d \mu
- \int_{S_2} f^d \varphi \, d \mu
\end{equation}
Since $f^e + \frac 1 {4 \pi} \geq 0$ we obtain with the help of $\eqref{TRALALO}$ that
$\int_{S_2} \Big( f + \frac 1 {4 \pi} \Big) \varphi \, d \mu \geq 0$
for almost every $s\in ]0, 1[$, and in fact for every 
$s\in [0, 1]$. 
This completes the proof.

\end{proof}

\section{Bibliography} 

%%%%%%%%%%%%%%%%%%%%%%%%%%%%%%%%%%%%%%%%%%%%%
%%%%%%%%%%%%%%%%%%%%%%%%%%%%%%%%%%%%%%%%%%%%%

\end{document}